\pdfoutput=1
\documentclass[12pt]{article}

\usepackage[T1]{fontenc}
\usepackage{graphicx}
\usepackage[numbers]{natbib}
\usepackage[font=footnotesize]{caption}

\usepackage{amssymb}
\usepackage{xcolor}
\usepackage{amsmath}
\usepackage{amsthm}
\newtheorem{theorem}{Theorem}[section]
\newtheorem{lemma}[theorem]{Lemma}

\newtheorem{remark}{Remark}
\newtheorem{assumption}{Assumption}[section]

\usepackage{hyperref}
\hypersetup{
  pdftitle={Asymptotic approximation of sensitivities in finite dimensional continuous data assimilation with application to parameter estimation},
  pdfauthor={Joshua Newey, Jared P. Whitehead}
}

\makeatletter
\renewcommand\section{\@startsection{section}{1}{\z@}%
  {-3.5ex \@plus -1ex \@minus -.2ex}%
  {2.3ex \@plus.2ex}%
  {\normalfont\Large\bfseries\raggedright}}
\renewcommand\subsection{\@startsection{subsection}{2}{\z@}%
  {-3.25ex\@plus -1ex \@minus -.2ex}%
  {1.5ex \@plus .2ex}%
  {\normalfont\large\bfseries\raggedright}}
\makeatother

\title{Asymptotic approximation of sensitivities in finite dimensional continuous data assimilation with application\\ to parameter estimation}

\author{Joshua Newey\\
  \small University of California Davis\\
  \small One Shields Ave\\
  \small Davis, CA 95616, USA
  \and
  Jared P. Whitehead\\
  \small Brigham Young University\\
  \small 275 TMCB\\
  \small Provo, UT 84602, USA}

\date{}

\begin{document}

\maketitle

\begin{abstract}
We develop a rigorous justification for a parameter estimation algorithm which couples continuous data assimilation to generic optimization.  For finite dimensional systems, we provide a rigorous justification of an asymptotic approximation of the sensitivity for the underlying modeled dynamical system, prove that the $L^2$ loss function satisfies an approximate Polyak-Lojasiewicz inequality, and use that result to justify convergence of gradient descent for the proposed algorithm.  Numerical examples are provided that demonstrate the precision of the rigorous results.
\end{abstract}

%%%%%%%%%%%%%%%%%%%%%%%%%%%%%%%%%%%%%%%%%%%%%%%%%%%%%%%%%%%%%%
%*************************************************************
%%%%%%%%%%%%%%%%%%%%%%%%%%%%%%%%%%%%%%%%%%%%%%%%%%%%%%%%%%%%%%

\section{Introduction}
For several decades the standard approach in scientific computation and simulation has been to start with a dynamical set of equations for the system to be modeled.  This dynamical system is then discretized in some fashion, and numerical algorithms are employed to approximate solutions which are meant to imitate reality.  Some of the issues with this approach come down to: 1) The lack of a perfectly observed initial condition for most systems. 2) Restrictions on numerical resolution that do not allow the complete resolution of all scales (in both time and space) of interest. And 3) The potential for missing physics or processes in the model itself.  In recent years these issues have been partially addressed by merging the traditional derivation of dynamical systems with data driven techniques.

Data assimilation in its many forms \cite{law2015data,asch2016data,carrassi2018data} addresses the first issue by incorporating observational data as it becomes available and augmenting the simulated state with the observed values, avoiding the necessity to exactly specify the initial condition for the simulation.  Sub-grid scale modeling and parameterizations \cite{alizadeh2022advances} address the second issue by replacing the unresolved scales with simplified models often derived from statistical assumptions on the system in question (see \cite{beck2019deep,grooms2023backscatter,guan2023learning} for example).  Parameterization of the unresolved scales is closely related to the third issue: identification of terms in the equations that are physically relevant yet unknown a priori (see \cite{brunton2016discovering,mangan2017model,wu2019numerical,cranmer2020discovering,messenger2021weak,viknesh2026adam,alcalde2026data} for example).  The overlap of these three issues is the setting for the rigorous justification provided by the present article: we present a theoretical foundation for an algorithm under which the model state is synchronized with observations while the unknown portion of the model is inferred from the same partially observed dynamics.

Continuous data assimilation is generically the utilization of continuous in time partial observations of a system, coupled with a sophisticated physical model, to ascertain the full state of the system.  Continuous data assimilation in concert with parameter recovery has developed in several complementary directions.  Early work used the persistent synchronization error to dynamically learn unknown parameters in dissipative systems \cite{CHL_2020,Lorenz_2021,Martinez_2022}, while more recent work has interpreted these updates as finite-dimensional root-finding and optimization algorithms \cite{newey2025model,martinez2025relaxation}.  Related ideas have also been used to identify more general model components, including unknown body forces in the two-dimensional incompressible Navier--Stokes equations from partial velocity observations \cite{farhat2024identifying,martinez2025reconstruction}.  These results suggest that the nudging mechanism is useful not only as a state-estimation procedure, but also as a way of producing computationally accessible inverse problems for unknown parameters or missing terms.

Several very recent studies further illustrate the current scope of this approach.  Ahmad, Biswas, and Hoffman \cite{ahmad2026dataassimilationaugmented} formulate a data-assimilation-augmented optimization framework for ODE parameter estimation from partial observations, producing a very comprehensive analysis of the identifiability and sensitivity for a time-delayed mismatch integrated loss functional for the Lorenz '63 dynamical system.  Wang, Carlson, and Hoffmann \cite{wang2026comparing} compare deterministic (including the algorithm discussed in the current paper) and stochastic parameter recovery algorithms on noisy Lorenz '63 and multiscale Lorenz '96 data, and report that deterministic parameter recovery paired with deterministic data assimilation can be more accurate and computationally efficient in their tests.  Li and Sanz-Alonso \cite{li2026continuoussurrogate} study continuous data assimilation with learned surrogate dynamics, proving that nudging with an imperfect learned model can retain exponential tracking up to an error floor determined by observation noise and surrogate approximation error.  The present work is complementary to these developments: rather than proposing a new cost functional, comparing stochastic and deterministic implementations, or replacing the dynamics by a learned surrogate, we isolate and justify the large-$\mu$ (nudging coefficient strength) asymptotic sensitivity approximation that underlies the efficient algorithms proposed in \cite{newey2025model}.  In addition, we utilize the large-$\mu$ assumption to justify convergence of the underlying optimization procedure, demonstrating that the specified loss function satisfies the Polyak-Lojasiewicz inequality which allows for rigorous bounds on the convergence of gradient descent or similar gradient based optimization algorithms.

\subsection{Basic Data Assimilation Problem Setup}
Consider the nudged, coupled system
\begin{align} \label{eq:truth}
    \frac{\partial u}{\partial t} &+ F(u;a)  = 0 \\ \label{eq:nudge}
    \frac{\partial v}{\partial t} &+ F(v;b)
    + \mu I_h(v-u) = 0,
\end{align}
with initial conditions $u(0) = u_0, v(0) = v_0 \neq u_0$. 
Precise assumptions on $F,~\mu$ and $I_h$ will be outlined below, but generically we assume that the system is such that $v(t) \rightarrow u(t)$ as $t\rightarrow \infty$ when $a=b$ (see \cite{AOT_2014} for the first analysis of this setup for the 2D Navier-Stokes equations).

Since the original presentation in \cite{AOT_2014}, several significant advancements have been made which extend this approach to additional dynamical settings with relaxed restrictions on the observations (when $a=b$). We will not review all of the relevant results here, but will highlight a few key contributions.  Convergence and error estimates have been extended from exact data available continuously in time at a fixed collection of coarse degrees of freedom, to observations corrupted by stochastic noise \cite{CDA_2_noise_2015}, supplied only at discrete times \cite{foias2016discrete} (see also the computational study \cite{hammoud2023continuous}), averaged over short temporal windows \cite{jolly2019continuous}, or placed on irregular, mesh-free collections of sensors \cite{biswas2021mesh}.  Computational studies with moving observers further indicate that the geometry and motion of the observation network can change the convergence rate by up to an order of magnitude and can permit accurate reconstruction with substantially fewer observational sensors \cite{franz2022dynamic}.

Extensions beyond 2D Navier-Stokes have also been demonstrated both rigorously and numerically.  Conditions stated directly in terms of the observed data have been developed for well-posedness and synchronization in the three-dimensional Navier--Stokes equations \cite{biswas2021three}, and AOT has been implemented in high-resolution global ocean calculations with realistic coastlines and bathymetry, where the computations reveal both rapid initial error reduction and model-dependent asymptotic error floors \cite{larios2024ocean}.  This method has been used for dynamical downscaling in a regional climate model \cite{desamsetti2022enhanced}, and a physics-informed neural network surrogate of such downscaling has been analyzed numerically for Rayleigh--B\'enard convection under observational and model noise \cite{hammoud2025downscaling} (see also \cite{hammoud2024data} for a deep reinforcement learning approach to data assimilation in chaotic systems).  In parallel, adaptive choices of the nudging coefficient have begun to replace conservative a priori bounds by gains selected from the evolving flow and assimilation error \cite{cibik2025adaptive}.  Thus, the observation resolution and the relaxation parameter are increasingly treated as coupled design variables that must balance synchronization, observational uncertainty, model error, and computational practicality.  This point is particularly relevant in the current study, as we rely on large $\mu$.

More recent studies have compared AOT directly with ensemble Kalman filtering in the presence of measurement error \cite{ning2024assessment}, extended the method to non-interpolant observables and problems with limited regularity \cite{yushutin2025continuous}, and established stochastic analogues for the two-dimensional Navier--Stokes equations \cite{bessaih2025stochastic}.  At the same time, \cite{titi2025inadequacy} demonstrates that dissipativity and determining observations are not merely technical conveniences: for non-dissipative systems, sparse observations may fail to distinguish among infinitely many trajectories.  These recent results motivate the explicit observability and dissipativity assumptions imposed below.

\subsection{Parameter estimation framework}
In our discussion and derivation below, the system for $u$ \eqref{eq:truth} is referred to as the `reference' or `drive' system.
The system for $v$ \eqref{eq:nudge} is referred to as the `model' or `response' system. The operator $I_h$ is the observation operator so that $I_hu$ denotes the components of the reference solution that are observable at each time $t$.  Along those lines, note that we are assuming that these observations are available continuously in time.  If that is not the case, an interpolant in time can be utilized, at the cost of additional restrictions on the time between observations (see \cite{foias2016discrete} for example).

Note that for this paper we will consider the case when both systems have the same functional form, i.e. $F$ is the same in both equations (this simplifies much of the analysis and notation below).  
This allows us to make more rigorous statements about convergence of the parameter estimation algorithm.  
This restriction is not fundamental to the algorithm however, and the rigorous results described here could be extended to a more general setting, at the cost of readability of the current manuscript.

Following \cite{newey2025model}, we seek the parameter $b$ which minimizes the error function 
\begin{align}\label{eq:error_func}
    E(b) = \left.\frac{1}{2}\| I_hv(b) - I_hu \|^2 \right|_{t=T},
\end{align}
where $T$ is some large value (see \cite{ahmad2026dataassimilationaugmented,schill2026asymptotic} for a time-averaged version of this same cost functional).
As noted above, in the precise case where the functional form of the reference and response system are identical, then the optimal solution will be $b\rightarrow a$ which is satisfied in the long time limit as $\lim_{T\rightarrow \infty} E(a) = 0$ under the appropriate assumptions on the system setup.  In \cite{newey2025model} it is noted that gradient based optimization and root-finding algorithms can be applied to \eqref{eq:error_func} utilizing the sensitivity equations of \eqref{eq:nudge}, but the key numerical observation there is that large $\mu$ asymptotics on said sensitivity equations leads to a computationally efficient algorithm that appears numerically to work in a variety of settings.  The current article provides rigorous justification for that approach.  This is completed below where Section \ref{sec:assumptions} describes the setup of the problem and the requisite assumptions on the dynamical system, Section \ref{sec:nudging_convergence} provides a rigorous justification for the convergence of \eqref{eq:nudge} to \eqref{eq:truth} when $a=b$ (see \cite{li2026continuoussurrogate} for a slightly different approach) in finite dimensions.  Section \ref{sec:rigorous_asymptotics} provides a rigorous justification for the asymptotic approximation motivated in \cite{newey2025model}, Section \ref{sec:rigorous_optimize} justifies convergence of gradient based optimization algorithms using the approximate, asymptotic gradient including that \eqref{eq:error_func} satisfies the PL-inequality.  Finally Section \ref{sec:examples} details how the underlying assumptions are satisfied in a simple setting and Section \ref{sec:conclusions} concludes with some discussion on future work and possible challenges for both the practical method and its rigorous analysis.

\section{Summary of the general algorithm and key assumptions}\label{sec:assumptions}
\subsection{The basic algorithm}
Under suitable assumptions which are described in detail below, identification of the optimal parameter minimizing \eqref{eq:error_func} can be performed via any gradient based optimization routine (see \cite{sun2020optimization} for a partial review of such algorithms).  For example, gradient descent leads to the following iterative scheme:
\begin{equation}\label{eq:gradient_descent}
    b_{k+1} = b_k - \gamma_k 
    \nabla E(b_k),
\end{equation}
where the learning rate $\gamma_k$ is selected to ensure a desired convergence (typically the learning rate $\gamma_k$ should decay with iteration $k$), and we assume that the initial guess $b_0$ is sufficiently close to the desired minimum.  The difficulty of course is that calculation of $\nabla E(b_k)$ is expensive and/or impractical without significantly rewriting an existing codebase.  For instance, in the specific case where the unknown parameter $b$ is a scalar then this gradient becomes the derivative with respect to the parameter:
\begin{equation}
    \left.\frac{\partial E}{\partial b} = \left\langle \frac{\partial}{\partial b} I_h v(b),I_hv(b)-I_h u\right\rangle\right|_{t=T},
\end{equation}
where we have assumed at least for now that the norm used in \eqref{eq:error_func} is the $L^2$ norm with accompanying inner product. We still must compute $\frac{\partial v}{\partial b}$ at time $T$. This can of course be accomplished by implicitly differentiating \eqref{eq:nudge} with respect to the parameter $b$ which identifies the sensitivity equation for $W=D_b v$ as
\begin{equation}\label{eq:sensitivity}
    \frac{\partial W}{\partial t} + D_v F(v;b)W + D_bF(v;b) +  \mu I_h W = 0,
\end{equation}
where $D_vF$ is the Jacobian of $F$ with respect to $v$ and $D_bF$ is the gradient when $b\in\mathbb{R}$ and the Jacobian with respect to the vector $b$ when there are multiple parameters (we will maintain the same notation in either case).  
Of course this requires the solution of an additional differential equation, in addition to \eqref{eq:nudge}.  For a single parameter $b$ this will not overly increase the computational cost, but for multiple parameters (vector valued $b$) this becomes computationally prohibitive, an issue that may be partially circumvented via an adjoint approach
(see \cite{schill2026asymptotic} for a presentation of just such an adjoint-based approach via modifying the loss function \eqref{eq:error_func} to be a time-averaged loss), but the additional cost in any case is significant.

The insight provided in \cite{newey2025model} is that in the context of data assimilation, these systems are considered for large values of $\mu$.  As such, the observable part of $W$ is approximated asymptotically as
\begin{equation}\label{eq:asymptotic}
    I_hW \sim -\frac{1}{\mu}I_h D_bF ,
\end{equation}
so long as the parameter $b$ only appears in the observable part of the original system \eqref{eq:nudge}.  
Using this asymptotic approximation to $I_hW$ provides a computationally viable alternative to solving \eqref{eq:sensitivity} that numerically appears to work remarkably well in a variety of settings as indicated in \cite{newey2025model}.

In summary then, the gradient descent algorithm becomes
\begin{align}
    b_{k+1} &= b_k + \frac{\gamma_k}{\mu} \left.\left\langle 
    D_{b_k} F
    ,I_hv(t;b_k) - I_hu(t)\right\rangle\right|_{t=T},
\end{align}
where for a single parameter $\frac{\partial F}{\partial b_k}  = D_{b_k}F$.
The purpose of this article is to clarify the required assumptions \eqref{eq:truth} and \eqref{eq:nudge} must fulfill as well as to rigorously justify the asymptotic approximation \eqref{eq:asymptotic} and its subsequent use in gradient-based optimization routines.

\begin{remark}
When the parameter $b$ is a vector of unknown quantities then the sensitivity will be  a matrix given by 
\begin{equation}\label{eq:sensitivities_vector}
    W(t;b) = D_bv(t;b),
\end{equation}
where $D_b$ is the Jacobian with respect to parameters.
Following this, we will refer to the $W$ as the sensitivity operator.
\end{remark}

\subsection{Key assumptions}
Here we will state the key assumptions required for the rigorous results produced below. 
In each case we will also clarify which assumptions are critical to the final results, and highlight those that are only necessary to simplify the resultant analysis, but are not essential to the final result.  
Rather than simply listing each assumption, we will also provide commentary that clarifies the need for each assumption. A similar approach is taken in \cite{li2026continuoussurrogate} where synchronization is justified for finite dimensional systems when the underlying model is only a surrogate and not the full, true dynamical system.

 We will suppose that the vector of possible parameters $b \in B \subset \mathbb{R}^m$ where $B$ is referred to as the admissible set of parameters.  
 Furthermore, $u(t),v(t):\mathbb{R} \rightarrow \mathbb{R}^n$ and we will further suppose that $I_h:\mathbb{R}^n\rightarrow\mathbb{R}^n$ with $\text{dim } \text{range } I_h = q\leq n$.  
 This implies that $F(v;b):\mathbb{R}^n\times\mathbb{R}^m \rightarrow \mathbb{R}^n$.  
 It also follows that the sensitivity is a matrix  $W\in\mathbb{R}^{n\times m}$.
Following this discussion, we will refer to $\mathbb{R}^n$ as \emph{state space}, and $\mathbb{R}^m$ as \emph{parameter space}.

In the following, $\|x\|$ and $\langle x, y \rangle$  refers respectively to the $l^2$ norm and corresponding inner product on $\mathbb{R}^n$.
We will also use the notation $|b|$ and $a \cdot b$ to denote the $l^2$ norm and corresponding inner product of vector valued parameters $b$.  
We will also use the notation $\|. \|$ to refer to the induced spectral norm on finite dimensional linear operators.

\begin{assumption}\label{assumption:projection_obs}
    The observation operator $I_h$ is a linear projection operator (i.e. idempotent, hermitian) with orthogonal complement $J_h$.
\end{assumption}
The linearity of the observation operator is most certainly critical for everything that follows, and we do not consider nonlinear observations here (see \cite{ahmad2026dataassimilationaugmented} for synchronization and identifiability analyses with a linear observation operator).  The restriction to a projection with the orthogonal complement $J_h$ is primarily to simplify the presentation that follows (see  \cite{foias2016discrete,farhat2018assimilation,jolly2019continuous,biswas2021data,biswas2021mesh,yushutin2025continuous} for a few examples of other types of observation operators in the continuous data assimilation setting).

\begin{assumption}\label{assumption:observable_parameters}
The unidentified parameters $b$ are restricted to the observable portion of the dynamics, i.e.
\begin{align}
    J_h D_bF = 0
\end{align}
\end{assumption}
This condition is an essential yet natural restriction that requires that the sought after parameters interact with portions of the state that are observable via $I_h$.  In other words, we can only recover/identify unknown parameters via the asymptotic reduction justified below for parameters that directly influence the observed portion of the state.  This particular restriction is only necessary for the asymptotic representation of the gradient \eqref{eq:asymptotic} and isn't necessary for the optimization problem itself, only for this approximate gradient.

\begin{assumption}\label{assumption:absorbing_ball}
    Solutions to the coupled system \eqref{eq:truth} and \eqref{eq:nudge} have a forward invariant global absorbing set $A_{b,\mu}$ in the cross product space for each value of $b\in B$ the admissible set of parameters,
    and there exists a ball $\mathcal{B}$ (centered at $0$) of radius $R$ such that
    \begin{equation}
        \bigcup_{b\in B, \mu \geq\mu_0} A_{b,\mu} \subset \mathcal{B},
    \end{equation}
    where $\mu_0$ is described in assumption \ref{assumption:jacobian_dissipation} below.
    We will refer to the projection onto the $v$-subspace of $\mathcal{B}$ as $\mathcal{B}_v$.
    We will assume initial conditions for $(u(0),v(0))$ are chosen from within $A_{b,\mu}$ and thus stay within $\mathcal{B}$.
\end{assumption}
This indicates that the solutions remain bounded for all time and that the dissipation in the system is sufficient to control the positive Lyapunov exponents that may arise.  This particular assumption is more generic than the previous one and is primarily relevant for justifying the convergence of the nudged solution \eqref{eq:nudge} to \eqref{eq:truth} plus a quantifiable `model error' term $O(|b-a|)$.

The next assumption is a summary of technical conditions on the dynamical system itself that are necessary to guarantee convergence.  There are other versions of this and the final assumption (stated below) which would also yield the desired final results (see \cite{li2026continuoussurrogate} for a different yet very related approach).  The assumptions stated here are not universal, or the most generally stated, but do provide an avenue to produce a readable and self-enclosed derivation.
\begin{assumption}\label{assumption:technical}
    Assume that $F(v;b)$ is continuously differentiable in $v$ for all $v$ inside the absorbing ball implying that $F$ is Lipschitz (uniformly over $b$) in $v$ with Lipschitz constant
\begin{align}
    L_F = \sup_{v \in \mathcal{B}_v,b \in B } \left\|
    D_vF(v;b)\right\|
    \label{eq:Lipschitz}
    .
\end{align}
We also require $F$ to be Lipschitz in the parameter $b$
\begin{align}
    \| F(v;b) - F(v;a) \|
    \leq C_F |b-a|.
\end{align}
Finally, we also require that the mixed second derivative of $F$ is also bounded, i.e.
\begin{equation}
    \left\|D_b D_vF(v;b)\right\| \leq L_{Fb}.
\end{equation}
$D_b D_v F(v;b)$ is a linear map from $\mathbb{R}^n\times \mathbb{R}^m \to \mathbb{R}^n $ and the norm used here is the operator norm.
All the constants above ($L_F,~C_F,~L_{Fb}$) are finite.
\end{assumption}

\begin{assumption}\label{assumption:jacobian_dissipation}
    We will assume that there exists a $\mu_0 > 0$, and  some symmetric positive definite matrix $P$.
    Such that for $\mu\geq\mu_0$ we have
    \begin{align}
        P (D_vF + \mu I_h)
        + (D_vF^T + \mu I_h) P
        \succeq \alpha I \label{eq:Dissipation},
    \end{align}
    for some $\alpha > 0$, where $I$ is the identity matrix, and for all $v \in \mathcal{B}_v$ and $b \in B$.
\end{assumption}

Notice that \eqref{eq:Dissipation} implies that there is a quadratic form $\langle P x, x \rangle$ which satisfies
\begin{align}
    \langle P ( D_vF(v;b) + \mu I_h )  x ,
    x \rangle &\geq  \frac{\alpha}{2} \| x\|^2,
\end{align}
this quadratic form gives us an equivalent norm on $\mathbb{R}^n$ given by 
\begin{align}
    \|x\|_P = \| P^{\frac{1}{2}} x\|.
\end{align}
This satisfies
\begin{align}
     \sqrt{\lambda_{\text{min}}(P)} \|x\| 
     \leq \|x\|_P \leq \sqrt{\lambda_{\text{max}}(P)} \|x\|
     . 
\end{align}

This condition guarantees the existence of a globally quadratic Lyapunov function for the linearized system
\begin{align}
    \dot{x}
    +D_vF(v;b) x + \mu I_h x= 0.
\end{align}
As shown below, this guarantees both the exponential convergence of nudging when the parameters agree and the desired asymptotic behavior of the sensitivities.
As we shall see, the necessary property for the results below is that the linearized system satisfies an exponential decay property for $\mu$ large enough.
Other conditions to guarantee this are certainly possible, however many would depend on the detailed dynamics of the system (as the linearized system depends on $v(t)$).
We make the choice here, as it is simple but sufficiently general to meet our needs.

We next give a sufficient criterion for verifying assumption \ref{assumption:jacobian_dissipation} which will be useful when we discuss specific examples.
\begin{lemma}\label{lem:jacobian_dissipation}
    Given the fact that $F$ is Lipschitz, then if we can find a symmetric positive definite matrix $P_J$ defined with range equal to the range of $J_h$ so that
    \begin{align}
        P_J J_h \{D_vF\} J_h
        + J_h \{D_vF\}^T J_h P_J \succeq \gamma J_h,
    \end{align}
    for some $\gamma > 0$, then Assumption \ref{assumption:jacobian_dissipation} holds.
\end{lemma}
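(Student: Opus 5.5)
We construct $P$ in block form with respect to the orthogonal decomposition $\mathbb{R}^n = \operatorname{range} I_h \oplus \operatorname{range} J_h$, namely
\begin{equation*}
P = \beta I_h + P_J,
\end{equation*}
where $\beta>0$ is a constant to be chosen. Since $P_J$ is symmetric positive definite on $\operatorname{range} J_h$ and vanishes on $\operatorname{range} I_h$, this $P$ is symmetric positive definite on $\mathbb{R}^n$. Write $A = D_vF(v;b)$. By the Lipschitz bound \eqref{eq:Lipschitz} of Assumption \ref{assumption:technical}, $\|A\|\le L_F$ uniformly for $v\in\mathcal{B}_v$ and $b\in B$. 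Using $I_h+J_h=I$, $I_hJ_h=0$, $P_J=J_hP_JJ_h$, and $I_hP=\beta I_h$, we expand
\begin{equation*}
M := P(A+\mu I_h) + (A^T+\mu I_h)P
\end{equation*}
into its four blocks. Testing against $x = y + z$ with $y = I_h x$ and $z = J_h x$ gives the quadratic form
\begin{equation*}
\langle Mx,x\rangle = 2\beta\mu\|y\|^2 + 2\beta\langle Ay,y\rangle + \langle (P_JJ_hAJ_h + J_hA^TJ_hP_J) z, z\rangle + \text{cross terms}.
\end{equation*}
Here the $\mu$ term only appears in the $I_h$ block, because $P_J I_h = 0$.

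The $J_h$ block is bounded below by $\gamma\|z\|^2$ by hypothesis. The $I_h$ block is bounded below by $2\beta(\mu - L_F)\|y\|^2$. The cross terms are $2\beta\langle A z, y\rangle + 2\langle P_J J_h A y, z\rangle$, so their absolute value is at most $2(\beta + \|P_J\|)L_F\|y\|\|z\|$. By Young's inequality,
\begin{equation*}
2(\beta+\|P_J\|)L_F\|y\|\|z\| \le \frac{\gamma}{2}\|z\|^2 + \frac{2(\beta+\|P_J\|)^2L_F^2}{\gamma}\|y\|^2 .
\end{equation*}
Combining the three pieces,
\begin{equation*}
\langle Mx,x\rangle \ge \Big(2\beta(\mu-L_F) - \tfrac{2(\beta+\|P_J\|)^2L_F^2}{\gamma}\Big)\|y\|^2 + \tfrac{\gamma}{2}\|z\|^2 .
\end{equation*}

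Take, for instance, $\beta = 1$. Then choose $\mu_0$ large enough that the coefficient of $\|y\|^2$ is at least $\gamma/2$ for all $\mu\ge\mu_0$:
\begin{equation*}
\mu_0 = L_F + \frac{(1+\|P_J\|)^2L_F^2}{\gamma} + \frac{\gamma}{4}.
\end{equation*}
Since $\|y\|^2 + \|z\|^2 = \|x\|^2$, we obtain $M \succeq \frac{\gamma}{2} I$, which is \eqref{eq:Dissipation} with $\alpha = \gamma/2$. The only point needing care is that the hypothesis on $P_J$ must hold uniformly for $v\in\mathcal{B}_v$ and $b\in B$. We read the lemma as assuming this, with a fixed $P_J$ and $\gamma$, so that $\mu_0$ and $\alpha$ are uniform.

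The main obstacle is the bookkeeping of the off-diagonal blocks. The essential observation is that the $\mu$ term appears only in the observed block. The unobserved block is controlled purely by the hypothesis on $P_J$, with half of $\gamma$ sacrificed to absorb the cross terms. The observed block then pays for the rest through large $\mu$.
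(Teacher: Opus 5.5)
Your proof is correct and follows essentially the same route as the paper. The paper also takes $P = I_h + P_J$ (your $\beta=1$), splits the quadratic form into the four $I_h/J_h$ blocks, absorbs the cross terms by Cauchy--Schwarz and Young at the cost of half of the hypothesized $\gamma$, and lets large $\mu$ dominate the observed block, reaching the same $\alpha=\gamma/2$. The differences are cosmetic: you work with the symmetrized matrix and a single combined Young step, so you get $\mu_0 = L_F + (1+\|P_J\|)^2L_F^2/\gamma + \gamma/4$ rather than the paper's slightly different constant, and you state explicitly that $P_J$ and $\gamma$ must be uniform over $v\in\mathcal{B}_v$, $b\in B$, which the paper leaves implicit.
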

\begin{proof}
    Without loss of generality, we assume that the space is partitioned so that we can define $P$ on all of $\mathbb{R}^n$ in block matrix form on the $I_h$ and $J_h$ subspaces as
    \begin{align}
        P = I_h + P_J.
    \end{align}
    Because the range of $P_J$ is the range of $J_h$, and $P$ is symmetric, we see that $P J_h = J_h P = P_J $.
    This shows us that $P I_h = I_h P = I_h$. 
    From this we see  
    \begin{align}
        \|I_h x\|_P &= \|I_h x\|,
        \notag\\
        \|J_h x\|_P &= \| J_h x\|_{P_J}.
    \end{align}
    Using this and the fact that $I= I_h + J_h$, letting $x\in\mathbb{R}^n$ be arbitrary, we can write
    \begin{align}
        \langle P \{D_vF\} x , x \rangle
        &= \langle P \{D_vF\} J_h x, J_hx \rangle
        + \langle P \{D_vF\} J_h x, I_h x \rangle
        + \langle P \{D_vF\} I_h x, J_hx \rangle\notag\\
        &+ \langle P\{D_vF\} I_h x, I_h x \rangle
        \notag\\
        &=
        \langle P_J J_h \{D_vF\} J_h x, J_hx \rangle
        + \langle \{D_vF\} J_h x, I_h x \rangle
        + \langle P \{D_vF\} I_h x, J_hx \rangle\notag\\
        &+ \langle \{D_vF\} I_h x, I_h x \rangle
    \end{align}
    By the assumptions of the lemma we have 
    \begin{align}
        \langle P_J J_h \{D_vF\} J_h x, J_hx \rangle
        \geq \frac{\gamma}{2} \| J_h x\|^2.
    \end{align}
    $F$ being Lipschitz gives us
    \begin{align}
        \langle \{D_vF\} I_h x , I_h x \rangle 
        \geq -L_F \|I_h x\|^2,
    \end{align}
    and using Cauchy Schwarz and Young's inequalities we can write
    \begin{align}
        \langle  P \{D_vF\} J_h x, I_h x \rangle
        &=
        \langle \{D_vF\} J_h x, I_h x \rangle
        \geq -L_F \|J_h x\| \| I_h x\|
        \notag\\
        &\geq - \frac{\gamma}{8} \|J_hx \|^2
        - \frac{4L_F^2}{ \gamma} \|I_h x\|^2 
        \\
        \langle P \{D_vF\} I_h x, J_h x \rangle
        &\geq -L_F \lambda_{\max}(P_J) \|J_h x\| \| I_h x\|
        \notag\\
        &\geq - \frac{\gamma}{8} \|J_hx \|^2
        - \frac{4L_F^2 \lambda_{\max}(P_J)^2}{ \gamma} \|I_h x \|^2.
    \end{align}
    Putting everything together gives us 
    \begin{align}
        \langle P \{D_vF\} x , x \rangle
        \geq \frac{\gamma}{4} \|J_h x\|^2
        - \left(
        L_F + \frac{4L_F^2 \lambda_{\max}(P_J)^2}{ \gamma}
        + \frac{4L_F^2}{ \gamma}
        \right) 
        \| I_h x\|^2.
    \end{align}
    Choosing
    \begin{align}
        \mu_0 =\left(
        L_F + \frac{4L_F^2 \lambda_{\max}(P_J)^2}{ \gamma}
        + \frac{4L_F^2}{ \gamma}
        \right) + \frac{\gamma}{4}, 
    \end{align}
    then for $\mu> \mu_0$ we have 
    \begin{align}
        \langle P  \{D_vF\} x, x \rangle
        + \mu \langle P I_h x,x \rangle
        &=
        \langle P \{D_vF\} x, x \rangle
        + \mu \| I_h x \|^2
        \notag\\
        &\geq
        \frac{\gamma}{4}\|J_h x\|^2
        + \frac{\gamma}{4}\|I_h x\|^2
        = \frac{\gamma}{4} \|x\|^2.
    \end{align}
    This concludes the proof with $\alpha = \gamma/2$.
\end{proof}

Weaker or more general conditions for dissipation are possible.  See \cite{li2026continuoussurrogate} or \cite{ahmad2026dataassimilationaugmented} for different approaches to obtain results similar to the next section.

\section{Convergence of Nudging}\label{sec:nudging_convergence}

In order for our optimization algorithm to be well posed (e.g. to have a meaningful minimum), we first need results about the convergence of nudging, i.e. when $a=b$.  This question is also addressed in \cite{li2026continuoussurrogate} in a different light but is presented here to maintain continuity in the argument.

\begin{lemma}\label{lem:bounded_error}
    For the coupled system
 \eqref{eq:truth}-\eqref{eq:nudge},    
with the nudging parameter chosen so that $\mu > \mu_0$ with $\mu_0$ chosen according to assumption \ref{assumption:jacobian_dissipation} then the state error $z=v-u$ is bounded for all time as
    \begin{equation}
        \|z(t)\|^2 \leq C_1 \|z(0)\|^2e^{-\gamma t} + \frac{C_2}{\gamma^2}|b-a|^2,
    \end{equation}
    for some positive constants $\gamma, C_1, C_2$.
\end{lemma}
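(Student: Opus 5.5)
We propose to derive an evolution equation for the error $z=v-u$ and run a Lyapunov (energy) estimate in the $P$-weighted norm from Assumption \ref{assumption:jacobian_dissipation}. Subtracting \eqref{eq:truth} from \eqref{eq:nudge} gives
\begin{equation*}
    \frac{dz}{dt} + F(v;b) - F(u;b) + \mu I_h z = -\bigl(F(u;b) - F(u;a)\bigr).
\end{equation*}
The first difference is linearized with the integral form of the mean value theorem:
\begin{equation*}
    F(v;b)-F(u;b) = M(t) z, \qquad M(t) = \int_0^1 D_vF\bigl(u+s z;b\bigr)\,ds .
\end{equation*}
Assumption \ref{assumption:absorbing_ball} keeps $u$ and $v$ in $\mathcal{B}$ for all time, so the segment $u+sz$ lies in the convex ball. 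This is why we want the dissipation inequality \eqref{eq:Dissipation} to hold for $v\in\mathcal{B}_v$, and it is where we implicitly identify the projection of $\mathcal{B}$ with a set containing both components.

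\textbf{Step 1 (averaged dissipation).} Since \eqref{eq:Dissipation} is affine in $D_vF$, averaging over $s$ gives
\begin{equation*}
    P(M+\mu I_h) + (M+\mu I_h)^T P \succeq \alpha I .
\end{equation*}
This is the one point that needs care: Assumption \ref{assumption:jacobian_dissipation} is stated pointwise, but the inequality is preserved under convex combinations.

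\textbf{Step 2 (energy estimate).} With $V(t)=\|z\|_P^2=\langle Pz,z\rangle$ we compute
\begin{equation*}
    \dot V = -\bigl\langle [P(M+\mu I_h)+(M+\mu I_h)^TP]z,z\bigr\rangle - 2\bigl\langle P(F(u;b)-F(u;a)),z\bigr\rangle
    \le -\alpha\|z\|^2 + 2\lambda_{\max}(P)\,C_F|b-a|\,\|z\| .
\end{equation*}
The last term uses the Lipschitz-in-parameter bound of Assumption \ref{assumption:technical}. Young's inequality then bounds the forcing term by
\begin{equation*}
    \tfrac{\alpha}{2}\|z\|^2 + \tfrac{2\lambda_{\max}(P)^2C_F^2}{\alpha}|b-a|^2 .
\end{equation*}
Next we convert $\|z\|^2 \ge V/\lambda_{\max}(P)$, which gives
\begin{equation*}
    \dot V \le -\gamma V + K|b-a|^2, \qquad \gamma = \frac{\alpha}{2\lambda_{\max}(P)}, \quad K = \frac{2\lambda_{\max}(P)^2 C_F^2}{\alpha}.
\end{equation*}

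\textbf{Step 3 (Gronwall and return to $\|\cdot\|$).} Grönwall's inequality yields
\begin{equation*}
    V(t)\le V(0)e^{-\gamma t} + \frac{K}{\gamma}|b-a|^2 .
\end{equation*}
Using the norm equivalence, we take
\begin{equation*}
    C_1=\frac{\lambda_{\max}(P)}{\lambda_{\min}(P)}, \qquad \frac{K}{\gamma\lambda_{\min}(P)} = \frac{4\lambda_{\max}(P)^3C_F^2}{\alpha^2\lambda_{\min}(P)} .
\end{equation*}
Since $\alpha = 2\lambda_{\max}(P)\gamma$, this constant equals $C_2/\gamma^2$ with $C_2 = \lambda_{\max}(P)C_F^2/\lambda_{\min}(P)$, which matches the stated form. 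The only genuine obstacle is Step 1, namely ensuring the dissipation bound applies along the whole segment between $u$ and $v$. Everything else is a routine Lyapunov/Grönwall computation, and $\mu$ enters only through $\mu>\mu_0$ making \eqref{eq:Dissipation} available.
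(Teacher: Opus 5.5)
Your proposal is correct and follows essentially the same route as the paper: the integral mean value theorem, an energy estimate in the $P$-norm using \eqref{eq:Dissipation}, Young's inequality, Gr\"onwall, and norm equivalence, arriving at the same $\gamma=\alpha/(2\lambda_{\max}(P))$ and $C_2=\lambda_{\max}(P)C_F^2/\lambda_{\min}(P)$. Two points where you are more careful than the paper. First, you justify explicitly, via a convex-combination argument, that \eqref{eq:Dissipation} applies to the averaged Jacobian; the segment between $u$ and $v$ stays in $\mathcal{B}_v$ automatically, since $\mathcal{B}$ is a centered ball and hence $\|u\|,\|v\|\le R$. Second, your constant $C_1=\lambda_{\max}(P)/\lambda_{\min}(P)$ is right, whereas the paper's final display drops the factor $\lambda_{\max}(P)$ when converting $\|z(0)\|_P^2$ to $\|z(0)\|^2$.
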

\begin{proof}
    The evolution of $z = v-u$ is given by
    \begin{align}
        \frac{dz}{dt} &=    \left[F(u;a)-F(v;b)\right]  - \mu I_h z\\
        &= - \mu I_h z - \left[F(v;b)-F(u;b)\right] - \left[F(u;b)-F(u;a)\right]. \label{eq:error_evolve1}
    \end{align}
    By Assumption \ref{assumption:jacobian_dissipation} there exists some symmetric positive definite matrix $P$ such that
    \begin{align}
        \langle P \{D_vF(u;z)\} z ,z \rangle
        + \mu \langle PI_h z, z \rangle
        \geq 
        \frac{\alpha}{2}
        \|z\|^2.
    \end{align}

    The mean value theorem for vector valued functions states that
    \begin{equation}
        F(v;b) - F(u;b) = \int_0^1D_vF(u+\theta(v-u);b)(v-u)d\theta,
    \end{equation}
    so that
    \begin{equation}
        \langle F(v;b)-F(u;b),Pz\rangle = \left\langle\int_0^1D_vF(u+\theta z;b)z d\theta,Pz\right\rangle.
    \end{equation}
    Taking the inner product with $Pz$ of \eqref{eq:error_evolve1} we find
    \begin{align}
        \frac{1}{2}\frac{d}{dt}\|z\|^2_P &=  - \mu 
        \langle P I_h z,z \rangle - \langle F(v;b)-F(u;b), Pz\rangle
        \notag\\
        &- \langle F(u;b)-F(u;a), Pz\rangle 
        \notag\\ 
        &\leq 
        -\left\langle P \int_0^1D_vF(u+\theta z; b)\} zd\theta, z \right\rangle
        - \mu \langle PI_h z, z \rangle 
        \notag\\
        &+ \lambda_{\max}(P) C_F|b-a|\|z\|_P
        \notag\\
        &\leq
        - \frac{\alpha}{2} \| z\|^2
        + C_F \sqrt{\lambda_{\max}(P)} |b-a|\|z\|_P
        \\
        &\leq
        - \frac{\alpha}
        {2 \lambda_{\max}(P)}
        \|z\|^2_P
        + C_F
        \sqrt{\lambda_{\max}(P)}
        |b-a|\|z\|_P.
    \end{align}
    Applying Young's inequality we arrive at:
    \begin{align}
        \frac{1}{2}\frac{d}{dt}\|z\|^2_P 
        &\leq -\frac{\alpha}{4\lambda_{\max}(P)}
        \|z\|^2_P 
        +
        \frac{C_F^2 \lambda_{\max}^2(P)}
        {\alpha}
        |b-a|^2.
    \end{align}
    Letting $\gamma = \frac{\alpha}{2 \lambda_{\max}(P)}$ and $C_{b} = C_F^2\lambda_{\max}(P)$ we have 
    \begin{align}
        \frac{1}{2}\frac{d}{dt}\|z\|^2_P 
        &\leq -
        \frac{\gamma}{2}
        \|z\|^2_P 
        +
        \frac{C_b}{2\gamma}
        |b-a|^2.
    \end{align}
    Then Gronwall's inequality gives us 
    \begin{align}
        \|z(t)\|_P^2
        \leq \|z(0)\|_P^2 e^{-\gamma t}
        + \frac{C_b}{\gamma^2}
        (1 - e^{- \gamma t})|b-a|^2.
    \end{align}
    \enlargethispage{\baselineskip}% keep the end-of-proof box on this page
    Equivalence of the Euclidean and $P$-norms gives us
    \begin{align}
        \|z(t)\|^2
        &\leq 
        \frac{1}{\lambda_{\min}(P)}
        \|z(t)\|^2_P
        \notag\\
        &\leq 
        \frac{1}{\lambda_{\min}(P)}
        \|z(0)\|_P^2 e^{-\gamma t}
        + 
        \frac{1}{\lambda_{\min}(P)}
        \frac{C_b}{\gamma^2}
        (1 - e^{- \gamma t})|b-a|^2
        \notag\\
        &\leq
        \frac{1}{\lambda_{\min}(P)}
        \|z(0)\|^2 e^{-\gamma t}
        +
        \frac{1}{\lambda_{\min}(P)}
        \frac{C_b}{\gamma^2}
        |b-a|^2
    \end{align}
\end{proof}

This bound shows that as $t\rightarrow \infty$ then $v(t)\rightarrow u(t) + O(|b-a|)$ i.e. the solution will converge up to model error in the specification of the nudged system \eqref{eq:nudge}.  This is consistent with the results established in \cite{CHL_2020,farhat2020data,larios2020approximate} and further clarified in \cite{li2026continuoussurrogate} that continuous data assimilation works up to the error in the model assumptions.

We will also need a similar decay estimate for the linearized system (governing the sensitivity) given by
\begin{align}
    \frac{d x}{dt}
    + D_vF(v;b) x + \mu I_h x &= 0
    \label{eq:linearized}
    .
\end{align}
We will phrase this result in terms of the state transition matrix $\Phi(s,t)$ which satisfies
\begin{align}
        \frac{d}{dt}
        \Phi(t,s)
        + D_vF(v(t);b) \Phi(t,s)
        + \mu I_h \Phi(t,s)
        &= 0,
        \notag\\
        \frac{d}{ds}
        \Phi(t,s)
        -\Phi(t,s) D_vF(v(s);b)
        - \mu \Phi(t,s) I_h
        &= 0,
        \notag\\
        \Phi(s,s) = I.
        \label{eq:transitionMatrix}
\end{align}

\begin{theorem}
    For $\Phi(t,s)$ satisfying \eqref{eq:transitionMatrix}, with $\mu \geq \mu_0$ we have 
    \begin{align}
        \|\Phi(t,s)\| \leq C_\phi e^{-\gamma_\phi (t-s)},
    \end{align}
    for some positive, $\mu$-independent constants $C_\phi$ and $\gamma_\phi$  where the norm above is the operator norm on the matrix.
\end{theorem}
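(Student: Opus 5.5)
The plan is a Lyapunov argument in the $P$-norm of Assumption \ref{assumption:jacobian_dissipation}, applied column by column to $\Phi(t,s)$. Fix $s$ and a vector $x_0\in\mathbb{R}^n$, and set $x(t)=\Phi(t,s)x_0$. By \eqref{eq:transitionMatrix}, $x$ solves the linearized system \eqref{eq:linearized} with $x(s)=x_0$. Along the nudged trajectory the state $v(t)$ stays in $\mathcal{B}_v$ for all $t$, by Assumption \ref{assumption:absorbing_ball}. Hence the matrix inequality \eqref{eq:Dissipation} holds at every time, with the same $P$ and $\alpha$ for all $\mu\ge\mu_0$.

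First compute the derivative of the $P$-norm:
\begin{align}
\frac{d}{dt}\|x\|_P^2 = -\left\langle \left[P(D_vF+\mu I_h)+(D_vF^T+\mu I_h)P\right]x,x\right\rangle \le -\alpha\|x\|^2 .
\end{align}
Next use norm equivalence, $\|x\|^2\ge \|x\|_P^2/\lambda_{\max}(P)$, to get
\begin{align}
\frac{d}{dt}\|x\|_P^2 \le -\frac{\alpha}{\lambda_{\max}(P)}\|x\|_P^2 .
\end{align}
Gronwall's inequality on $[s,t]$ then gives
\begin{align}
\|x(t)\|_P^2\le e^{-\alpha(t-s)/\lambda_{\max}(P)}\|x_0\|_P^2 .
\end{align}
Converting back to Euclidean norms with $\lambda_{\min}(P)$ and $\lambda_{\max}(P)$ yields
\begin{align}
\|x(t)\|\le \sqrt{\lambda_{\max}(P)/\lambda_{\min}(P)}\, e^{-\alpha(t-s)/(2\lambda_{\max}(P))}\|x_0\|.
\end{align}
Taking the supremum over unit $x_0$ gives the operator-norm bound with
\begin{align}
C_\phi=\sqrt{\lambda_{\max}(P)/\lambda_{\min}(P)},\qquad \gamma_\phi=\frac{\alpha}{2\lambda_{\max}(P)} .
\end{align}

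The main point to check is that these constants do not depend on $\mu$. As I read Assumption \ref{assumption:jacobian_dissipation}, it supplies a single $P$ and a single $\alpha$ valid for every $\mu\ge\mu_0$, uniformly in $v\in\mathcal{B}_v$ and $b\in B$. If so, $C_\phi$ and $\gamma_\phi$ are $\mu$-independent and the proof is complete. If one instead reads $P$ and $\alpha$ as possibly depending on $\mu$, the fix is monotonicity. Since $PI_h+I_hP$ need not be positive semidefinite for a general $P$, I would restrict to the construction of Lemma \ref{lem:jacobian_dissipation}, where $P=I_h+P_J$ and $PI_h=I_hP=I_h$. Then
\begin{align}
P(D_vF+\mu I_h)+(D_vF^T+\mu I_h)P = \left[P(D_vF+\mu_0 I_h)+(D_vF^T+\mu_0 I_h)P\right]+2(\mu-\mu_0)I_h .
\end{align}
The last term is positive semidefinite, so the inequality holds for all $\mu\ge\mu_0$ with the constants fixed at $\mu_0$. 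A small technical point is that $v(t)$ itself depends on $\mu$, but uniform membership in $\mathcal{B}_v$ takes care of this.
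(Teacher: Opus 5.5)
Your proposal is correct and follows the paper's proof essentially verbatim. Both use the same $P$-norm Lyapunov estimate, then Gronwall, then norm equivalence, and arrive at the identical constants $C_\phi=\sqrt{\lambda_{\max}(P)/\lambda_{\min}(P)}$ and $\gamma_\phi=\alpha/(2\lambda_{\max}(P))$. Your extra monotonicity remark is a valid safeguard for $\mu$-independence that the paper does not need: using $PI_h=I_hP=I_h$ from the construction in Lemma \ref{lem:jacobian_dissipation} covers the case of a $\mu$-dependent $P$, whereas the paper simply reads a single $P$ and $\alpha$ off Assumption \ref{assumption:jacobian_dissipation}.
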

\begin{proof}
    Because $\Phi(t,s)$ is the time evolution operator we have 
    \begin{align}
        x(t) = \Phi(t,s) x(s),
    \end{align}
    for $t > s$ and $x(t)$ as the solution of \eqref{eq:linearized}.
    We just need to show that 
    \begin{align}
        \|x(t)\| \leq C_\phi  e^{- \gamma_\phi(t-s)} \|x(s)\|. 
    \end{align}
    For solutions of \eqref{eq:linearized} with any possible initial condition $x(s)$ starting at $t=s$, and constants $C_\phi$ and $\gamma_\phi$ chosen so that the inequality holds globally for all $t$ and $s$.
    Using assumption \ref{assumption:jacobian_dissipation}, we have
    \begin{align}
        \frac{d}{dt} \| x\|_P^2
        &= - 2\langle D_vF(v;b) x, Px \rangle
        - 2 \mu \langle I_h P x,x \rangle
        \notag\\
        &\leq - \alpha\| x\|^2
        \leq - \frac{\alpha}{ \lambda_{\max}(P)} \|x\|_P^2.
    \end{align}
    Gronwall's inequality gives us 
    \begin{align}
        \| x(t) \|_P  \leq
        e^{-\frac{\alpha}{2\lambda_{\max}(P)}(t-s)}
        \|x(s)\|_P.
    \end{align}
    Equivalence of the Euclidean and $P$-norms then gives us 
    \begin{align}
        \| x(t) \|  \leq
        \sqrt{\frac{\lambda_{\max}(P)}
        {\lambda_{\min}(P)}}
        e^{-\frac{\alpha}{2 \lambda_{\max}(P)}(t-s)}
        \|x(s)\|.
    \end{align}
    This holds for all initial conditions and thus we conclude that 
    \begin{align}
        \left\|\Phi(t,s)\right\| \leq
        \sqrt{\frac{\lambda_{\max}(P)}
        {\lambda_{\min}(P)}}
        e^{-\frac{\alpha}{2 \lambda_{\max}(P)}(t-s)}.
    \end{align}
\end{proof}

\section{Justification of the Asymptotic Approximation of Sensitivities}\label{sec:rigorous_asymptotics}
We consider the sensitivity equations for the nudged system.
Restated here, we define the sensitivity as the solution $W(t;b)$ of:
\begin{align}
    \dot{W}
    + D_vF(v;b) W + D_bF(v;b)
    + \mu I_h W = 0.
\end{align}
We will assume that the initial conditions are asymptotically small as $\mu \to \infty$.
Explicitly, we assume that the initial conditions satisfy
\begin{align}
    \| W(t=0)\| \leq \frac{1}{\mu} C_0
    \label{eq:InitCondit}
    .
\end{align}
If the initial conditions for the nudged system are chosen independent of previous choices of parameters, as proposed in \cite{ahmad2026dataassimilationaugmented}, this will always be true because we will have $W(t=0)=0$. Such a re-initialization can also be carried out with the open-source implementation accompanying \cite{schill2026asymptotic}, although no parameter-estimation results using it are reported there.
If the parameter updates are done in-line in epochs as in \cite{newey2025model}, then this will be true by induction; 
The first epoch will have $W(t=0) = 0$, and the inductive step follows from the results of this section (see section \ref{sec:inline}).

As in \cite{Lorenz_2021} we first need to show that the temporal derivative of the nudged solution is bounded.

\begin{lemma}[Bound on the time derivative with an initial layer] \label{lem:dvdt_bounded}
Let $z=v-u$ and suppose that the assumptions of the previous section
hold. Define
\[
E_0:=\|I_hz(0)\|,
\qquad
C_G:=L_FC_z+C_F|b-a|,
\]
where $\|z(t)\|\leq C_z$ uniformly in time. Then, for every $t\geq0$,
\[
\|I_hz(t)\|
\leq
e^{-\mu t}E_0
+
\frac{C_G}{\mu}\left(1-e^{-\mu t}\right).
\]
Consequently,
\[
\left\|\frac{dv}{dt}(t)\right\|
\leq
C_v+\mu E_0e^{-\mu t},
\qquad
C_v:=M_F+C_G,
\]
where
\[
M_F:=\sup_{t\geq0}\|F(v(t);b)\|.
\]
In addition, for every $\gamma>0$,
\[
\int_0^t
e^{-\gamma(t-s)}
\left\|\frac{dv}{ds}(s)\right\|\,ds
\leq
\frac{C_v}{\gamma}+E_0.
\]
All constants on the right-hand side are independent of $\mu$.
\end{lemma}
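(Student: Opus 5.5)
The plan is to apply $I_h$ to the error equation \eqref{eq:error_evolve1} and treat the resulting equation for $I_hz$ as a scalar-type linear ODE with a forcing that is bounded independently of $\mu$. Since $I_h$ is a constant orthogonal projection (Assumption \ref{assumption:projection_obs}), it commutes with $d/dt$ and $I_h I_h = I_h$. Hence
\begin{align}
\frac{d}{dt} I_h z = -\mu I_h z - I_h G(t),
\qquad
G(t):=\left[F(v;b)-F(u;b)\right]+\left[F(u;b)-F(u;a)\right].
\end{align}
By Assumption \ref{assumption:technical} and the uniform bound $\|z\|\le C_z$ (which follows from Lemma \ref{lem:bounded_error}, with $C_z$ independent of $\mu$ since $C_1,C_2,\gamma$ are), we get $\|I_hG\|\le\|G\|\le L_F\|z\|+C_F|b-a|\le C_G$. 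Here the Lipschitz bound in $v$ is valid because $u,v$ stay in $\mathcal{B}$ by Assumption \ref{assumption:absorbing_ball}, and the ball is convex, so the segment lies inside it.

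Next I will take the inner product with $I_hz$. This gives $\frac{1}{2}\frac{d}{dt}\|I_hz\|^2\le-\mu\|I_hz\|^2+C_G\|I_hz\|$, i.e. $\frac{d}{dt}\|I_hz\|\le-\mu\|I_hz\|+C_G$ wherever $I_hz\neq0$. This step is the one technical point: the norm is not differentiable at zero. I would handle it either by working with $\sqrt{\|I_hz\|^2+\epsilon}$ and letting $\epsilon\to0$, or more cleanly via the variation of constants formula
\begin{align}
I_hz(t)=e^{-\mu t}I_hz(0)-\int_0^t e^{-\mu(t-s)}I_hG(s)\,ds.
\end{align}
Taking norms then gives $\|I_hz(t)\|\le e^{-\mu t}E_0+\frac{C_G}{\mu}(1-e^{-\mu t})$ directly, with no differentiability issue. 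I will use this second route.

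For the derivative bound, I write $\frac{dv}{dt}=-F(v;b)-\mu I_hz$. Then
\begin{align}
\left\|\frac{dv}{dt}\right\|\le M_F+\mu\|I_hz\|\le M_F+\mu E_0e^{-\mu t}+C_G(1-e^{-\mu t})\le C_v+\mu E_0e^{-\mu t}.
\end{align}
The constant $M_F$ is finite and independent of $\mu$: $F$ is continuous on the compact set $\overline{\mathcal{B}_v}\times\overline B$, and $v$ stays in $\mathcal{B}_v$ for all $\mu\ge\mu_0$. If $B$ is not bounded, I would instead bound $\|F(v;b)\|\le\|F(0;a)\|+L_FR+C_F|b-a|$.

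Finally, for the convolution estimate I integrate the derivative bound against $e^{-\gamma(t-s)}$. The first term gives $C_v\int_0^te^{-\gamma(t-s)}ds\le C_v/\gamma$. For the initial-layer term I use $e^{-\gamma(t-s)}\le1$, so that $\mu E_0\int_0^t e^{-\mu s}\,ds\le E_0$. All constants ($C_v$, $E_0$, $C_G$, $M_F$) are $\mu$-independent by the above. The main thing to keep honest is that $C_z$, and hence $E_0$ through $v(0)$, does not secretly depend on $\mu$. This holds because $\mu$ enters Lemma \ref{lem:bounded_error} only through the requirement $\mu\ge\mu_0$, and the initial data are fixed.
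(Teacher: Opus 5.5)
Your proposal is correct and matches the paper's proof essentially step for step. Both apply variation of constants to $q=I_hz$ with forcing bounded by $C_G$, then use $\frac{dv}{dt}=-F(v;b)-\mu I_hz$, and bound the initial-layer part of the convolution with $e^{-\gamma(t-s)}\le 1$. Your extra remarks on the $\mu$-independence of $C_z$ and $M_F$, and on the convexity of $\mathcal{B}_v$ for the Lipschitz bound, fill in details the paper leaves implicit.
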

\begin{proof}
    The observable error satisfies
    \begin{equation}
        \frac{d q}{d t}  + \mu q = -I_h(F(v;b)-F(u;a)).
    \end{equation}
    Using variation of constants this is rewritten as
    \begin{equation}
        q(t) = e^{-\mu t}q(0) - \int_0^te^{-\mu(t-s)}I_h(F(v(s);b)-F(u(s);a))ds.
    \end{equation}
    Since
    \begin{equation}
        \|F(v;b)-F(u;a)\| \leq L_F\|v-u\| + C_F|b-a| \leq C_G,
    \end{equation}
    then we find that
    \begin{align}
        \|q(t)\| &\leq e^{-\mu t}E_0 + C_G\int_0^te^{-\mu (t-s)}ds\\
        &= e^{-\mu t}E_0 + \frac{C_G}{\mu}(1-e^{-\mu t}).
    \end{align}

    Now, we rewrite the evolution of $v(t)$ as
    \begin{equation}
        \frac{dv}{d t} = -F(v;b) - \mu q.
    \end{equation}
    Looking at the $L^2$ norm of this
    \begin{align}
        \left\|\frac{dv}{dt}\right\| &\leq M_F + \mu \|q\|,\\
        &\leq M_F + \mu E_0e^{-\mu t} + C_G(1-e^{-\mu t}),\\
        &\leq C_v + \mu E_0e^{-\mu t}.
    \end{align}

    Finally using these bounds, we get to
    \begin{align}
        \int_0^te^{-\gamma (t-s)}\left\|\frac{dv}{ds}(s)\right\| ds &\leq C_v\int_0^te^{-\gamma (t-s)}ds + \mu E_0\int_0^te^{-\gamma (t-s)}e^{-\mu s}ds,\\
        &\leq \frac{C_v}{\gamma} + E_0,
    \end{align}
    where we used the fact that $e^{-\gamma (t-s)} \leq 1$ for the second integral above.
\end{proof}

\begin{lemma} \label{lemma:Wsmall}
    Applying the same assumptions as above, we have 
    \begin{align}
        \|W(t)\| \leq \frac{1}{\mu} C_w,
    \end{align}
    for some constant $C_w$ independent of $\mu$.
\end{lemma}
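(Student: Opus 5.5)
The bound cannot come from a direct Duhamel estimate: the forcing $D_bF$ is $O(1)$, so $\int_0^t \Phi(t,s)\,D_bF\,ds$ is only $O(1)$ by the decay estimate for $\Phi$. The extra factor $1/\mu$ has to come from Assumption \ref{assumption:observable_parameters}. Since $J_hD_bF=0$, we have $D_bF = I_hD_bF$. So the forcing lives entirely in the range of $I_h$, which is the subspace where the strong damping $\mu I_h$ acts. I will make this precise by integrating by parts in the Duhamel formula, using the backward equation for $\Phi(t,s)$ in \eqref{eq:transitionMatrix}.

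First, by variation of constants,
\begin{align*}
W(t) = \Phi(t,0)W(0) - \int_0^t \Phi(t,s)\, D_bF(v(s);b)\,ds .
\end{align*}
By the decay theorem for $\Phi$ and \eqref{eq:InitCondit}, the first term is bounded by $C_\phi C_0/\mu$.

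For the integral, I write $D_bF = \frac{1}{\mu}\,\mu I_h D_bF$. The second line of \eqref{eq:transitionMatrix} gives
\begin{align*}
\mu\,\Phi(t,s) I_h = \partial_s \Phi(t,s) - \Phi(t,s) D_vF(v(s);b).
\end{align*}
Substituting this and integrating by parts in $s$ yields
\begin{align*}
\int_0^t \Phi(t,s) D_bF\,ds = \frac{1}{\mu}\Big( D_bF(v(t);b) - \Phi(t,0)D_bF(v(0);b) - \int_0^t \Phi(t,s)\tfrac{d}{ds}\big[D_bF(v(s);b)\big]\,ds - \int_0^t \Phi(t,s) D_vF\, D_bF\,ds \Big).
\end{align*}
Let $M_b:=\sup_{v\in\mathcal{B}_v,\,b\in B}\|D_bF(v;b)\|$. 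This is finite by continuity on the bounded absorbing set of Assumption \ref{assumption:absorbing_ball}; I add this to the standing hypotheses if it is not already implied by Assumption \ref{assumption:technical}. With it, the boundary terms are bounded by $(1+C_\phi)M_b$. The last integral is bounded by $C_\phi L_F M_b/\gamma_\phi$, using the exponential decay of $\Phi$ and the Lipschitz bound \eqref{eq:Lipschitz}.

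The main obstacle is the remaining term, which involves $\frac{d}{ds}D_bF(v(s);b) = D_vD_bF(v(s);b)\,\dot v(s)$. Its norm is at most $L_{Fb}\|\dot v(s)\|$ by Assumption \ref{assumption:technical}. The difficulty is that $\dot v$ is not uniformly bounded in $\mu$: it has an initial layer of size $\mu E_0 e^{-\mu s}$. This is exactly what Lemma \ref{lem:dvdt_bounded} controls. It gives
\begin{align*}
\int_0^t \|\Phi(t,s)\|\,\|\dot v(s)\|\,ds \le C_\phi\int_0^t e^{-\gamma_\phi(t-s)}\|\dot v(s)\|\,ds \le C_\phi\Big(\frac{C_v}{\gamma_\phi}+E_0\Big),
\end{align*}
which is independent of $\mu$.

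Collecting the four contributions gives $\|W(t)\|\le C_w/\mu$ with
\begin{align*}
C_w = C_\phi C_0 + (1+C_\phi)M_b + C_\phi L_{Fb}\Big(\frac{C_v}{\gamma_\phi}+E_0\Big) + \frac{C_\phi L_F M_b}{\gamma_\phi}.
\end{align*}
Every constant here is $\mu$-independent by the preceding theorem and Lemma \ref{lem:dvdt_bounded}.
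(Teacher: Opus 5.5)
Your proposal is correct and follows essentially the same route as the paper. Both arguments use Duhamel's formula, then $D_bF=I_hD_bF$ together with $\mu\Phi(t,s)I_h=\partial_s\Phi(t,s)-\Phi(t,s)D_vF$, then integration by parts in $s$, and finally Lemma \ref{lem:dvdt_bounded} to absorb the initial layer of $\dot v$. The one cosmetic difference is that the paper bounds $\|D_bF\|$ by $C_F$ directly, since the Lipschitz-in-$b$ constant of Assumption \ref{assumption:technical} already bounds the derivative. So your extra constant $M_b$, and the hypothesis you considered adding for it, are unnecessary.
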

\begin{proof}
    Consider the sensitivity equation 
    \begin{align}
        \frac{dW}{dt}
        + D_vF(v;b) W + \mu I_h W
        &= - D_bF(v;b),
        \\
        W(0) = W_0.
    \end{align}
    where we remind the reader of the assumption that 
    \begin{align}
        \|W_0\| \leq \frac{1}{\mu} C_0
    \end{align}
    
    Let $\Phi(t,s)$ be the state transition matrix for this linear system as defined by \eqref{eq:transitionMatrix}. Duhamel's principle tells us the general solution will then be 
    \begin{align}
        W = \Phi(t,0) W_0
        - \int_0^t \Phi(t,s) D_bF(v(s);b)
        ds.
    \end{align}
   On the initial condition dependent term;
    we have
    \begin{align}
        \|\Phi(t,0) W_0\|
        \leq 
        \frac{1}{\mu}
        C_\phi e^{-\gamma_\phi t} C_0.
    \end{align}
    Next we consider the inhomogeneous term.
    We have $D_bF = I_h D_bF$ which gives us
    \begin{align}
        &\int_0^t
        \Phi(t,s)D_bF(v(s);b) ds
        =
        \int_0^t
        \Phi(t,s) 
        I_h
        D_bF(v(s);b) ds
        \notag\\
        &= 
        \frac{1}{\mu}
        \int_0^t
        \left(
        \frac{\partial}{\partial s}
        \Phi(t,s)
        - \Phi(t,s) D_vF(v(s);b)
        \right)
        D_bF(v(s);b)
        ds.
    \end{align}
    Integrating by parts this integral can be written as 
    \begin{align}
        &= \frac{1}{\mu}
        \left[D_bF(v(t);b) 
        - \Phi(t,0) D_bF(v(0);b)
        \right]     
        - \frac{1}{\mu}
        \int_0^t \Phi(t,s)
        \frac{\partial}{\partial s}
        D_bF(v(s);b)
        ds
        \notag\\
        &+ 
        \frac{1}{\mu}
        \int_0^t
        \Phi(t,s) D_vF(v(s);b)
        D_bF(v(s);b)
        ds.
    \end{align}
    Looking at each of these terms separately we have
    \begin{align}
        \left\|\frac{1}{\mu}
        \left(
        D_bF(v(t);b)
        - \Phi(t,0) 
         D_bF(v(0);b)
        \right)
        \right\|
        \leq \frac{1}{\mu}
        \left(
        1 + C_\phi e^{-\gamma_\phi t}
        \right) C_F.
    \end{align}
    Moving on we have
    \begin{align}
        \left\| 
        \frac{1}{\mu}
        \int_0^t \Phi(t,s)
        \frac{\partial}{\partial s}
        D_bF(v(s);b)
        ds
        \right\|
        &\leq
        \frac{1}{\mu}
        \int_0^t C_\phi e^{-\gamma_\phi (t-s)}
        \left\|\frac{\partial}{\partial s}
        D_bF(v(s);b)
        \right\|ds.
    \end{align}
    Notice that 
    \begin{align}
        \frac{\partial}{\partial s}
        D_bF(v(s);b)
        = D_b\{D_vF(v(s);b)\}
        \frac{d v}{ds},
    \end{align}
    so that we can bound this factor as
    \begin{align}
        \left\|
        D_b\{D_vF(v(s);b)\}
        \frac{d v}{ds}
        \right\|
        \leq
        L_{Fb}
        \left\|\frac{dv}{ds}\right\|.
    \end{align}
    This gives us 
    \begin{align}
        \left\| 
        \frac{1}{\mu}
        \int_0^t \Phi(t,s)
        \frac{\partial}{\partial s}
        D_bF(v(s);b)
        ds
        \right\|
        &\leq 
        \frac{C_\phi L_{Fb}}{\mu}\int_0^te^{-\gamma_\phi(t-s)}\left\|\frac{dv}{ds}\right\|ds\\
        &\leq \frac{C_\phi L_{Fb}}{\mu}\left(\frac{C_v}{\gamma_\phi} +E_0\right).
    \end{align}
    The final remaining term is bounded as 
    \begin{align}
        \left\|\frac{1}{\mu}
        \int_0^t
        \Phi(t,s) D_vF(v(s);b)
        D_bF(v(s);b)
        ds
        \right\|
        &\leq \frac{1}{\mu}
        \frac{1 - e^{-\gamma_\phi t}}{\gamma_\phi}
        C_\phi
        L_F C_F,
    \end{align}
    which completes the proof.
\end{proof}

We can now prove the desired result
\begin{theorem}\label{thm:asymptotic_sensitivity}
    For $t > \frac{2}{\mu} \ln \mu,~\mu>2$, with $\mu\geq \mu_0$, and given the previous assumptions we have 
    \begin{align}
        \left\| I_hW(t) + \frac{1}{\mu}
        D_bF(v(t);b)
        \right\|
        \leq 
        \frac{1}{\mu^2}
        C_\zeta.
    \end{align}
\end{theorem}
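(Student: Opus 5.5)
We will work with the deviation $\zeta := W + \frac{1}{\mu}D_bF(v;b)$. By Assumption \ref{assumption:observable_parameters} we have $I_h\zeta = I_hW + \frac{1}{\mu}D_bF$, so the claim is equivalent to the bound $\|I_h\zeta(t)\|\leq C_\zeta/\mu^2$. Rather than treating the full $\zeta$ with the transition matrix $\Phi$, we will project the sensitivity equation onto the observed subspace. There the nudging term acts as a pure damping $\mu$, so one factor of $1/\mu$ comes from this damping and the second comes from Lemma \ref{lemma:Wsmall}.

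\textbf{Step 1.} Apply $I_h$ to the sensitivity equation and use $I_h^2=I_h$ and $I_hD_bF=D_bF$:
\begin{align}
\frac{d}{dt}I_hW + \mu I_hW + D_bF(v;b) = -I_hD_vF(v;b)W.
\end{align}
Substituting $I_hW = I_h\zeta - \frac{1}{\mu}D_bF$, the terms $\mu\cdot(-\frac1\mu D_bF)$ and $D_bF$ cancel, and we obtain
\begin{align}
\frac{d}{dt}I_h\zeta + \mu I_h\zeta = -I_hD_vF(v;b)W + \frac{1}{\mu}\frac{d}{dt}D_bF(v(t);b) =: G(t).
\end{align}

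\textbf{Step 2.} Bound the forcing $G$. By Lemma \ref{lemma:Wsmall}, the first term satisfies $\|I_hD_vF\,W\|\leq L_FC_w/\mu$. As in the proof of Lemma \ref{lemma:Wsmall}, the second term is bounded by $\frac{L_{Fb}}{\mu}\|\dot v\|$, and Lemma \ref{lem:dvdt_bounded} gives $\|\dot v\|\leq C_v+\mu E_0e^{-\mu t}$. Hence
\begin{align}
\|G(t)\|\leq \frac{L_FC_w+L_{Fb}C_v}{\mu} + L_{Fb}E_0e^{-\mu t}.
\end{align}

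\textbf{Step 3.} Use variation of constants:
\begin{align}
I_h\zeta(t)=e^{-\mu t}I_h\zeta(0)+\int_0^te^{-\mu(t-s)}G(s)\,ds.
\end{align}
The first term is at most $e^{-\mu t}(C_0+C_F)/\mu$. The constant part of $G$ contributes at most $\frac{1}{\mu}\cdot\frac{L_FC_w+L_{Fb}C_v}{\mu}$.

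\textbf{Main obstacle.} The delicate contribution is the initial-layer piece of $G$:
\begin{align}
\int_0^t e^{-\mu(t-s)}L_{Fb}E_0e^{-\mu s}\,ds = L_{Fb}E_0\,t\,e^{-\mu t}.
\end{align}
This is where the time restriction enters. For $t>\frac{2}{\mu}\ln\mu$ we have $e^{-\mu t}<\mu^{-2}$, which immediately handles the initial-condition term. For the resonant term we use that $te^{-\mu t}$ is decreasing for $t>1/\mu$, so for $\mu>2$ (which guarantees $\frac{2}{\mu}\ln\mu>\frac1\mu$) it is maximized at the endpoint $t=\frac{2}{\mu}\ln\mu$. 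There it equals $\frac{2\ln\mu}{\mu^3}$, and since $\ln\mu\leq\mu$ this is at most $2/\mu^2$.

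Collecting the three contributions gives $\|I_h\zeta\|\leq C_\zeta/\mu^2$, with
\begin{align}
C_\zeta = C_0+C_F+L_FC_w+L_{Fb}C_v+2L_{Fb}E_0,
\end{align}
which is independent of $\mu$ because $C_w$, $C_v$ and $E_0$ are.

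A remaining point to check is that $E_0$ is $\mu$-independent, which holds if $v(0)$ is fixed. If $\dot v$ at $t=0$ were used instead of $E_0$, the resonant term would scale like $\mu t e^{-\mu t}$ and still be absorbed with the same argument.
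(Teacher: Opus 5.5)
Your proof is correct and is essentially the paper's argument. The paper also sets $\zeta = I_hW + \frac{1}{\mu}D_bF$, derives $\dot\zeta = -\mu\zeta - I_hD_vF(v;b)W + \frac{1}{\mu}\frac{d}{dt}D_bF$, bounds the forcing with Lemmas \ref{lemma:Wsmall} and \ref{lem:dvdt_bounded}, and absorbs the initial-layer term $L_{Fb}E_0\,te^{-\mu t}$ using $t\geq\frac{2}{\mu}\ln\mu$. The only difference is cosmetic: the paper integrates a differential inequality for $\|\zeta\|$ with the factor $e^{\mu t}$, whereas you apply variation of constants directly, which also avoids differentiating $\|\zeta\|$ where it vanishes.

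Drop your closing aside about replacing $E_0$ by $\dot v(0)$; it is unnecessary. Read literally it is also wrong, since a term of size $\mu te^{-\mu t}$ is only $O(\ln\mu/\mu^2)$ at $t=\frac{2}{\mu}\ln\mu$ and would not be absorbed.
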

\begin{proof}
    Define the asymptotic error in the representation as
    \begin{equation}
        \zeta = I_hW + \frac{1}{\mu} D_bF.
    \end{equation}
    The time derivative of $\zeta$ is then given by
    \begin{align}
        \frac{d\zeta}{dt} &= I_h\frac{dW}{dt} + \frac{1}{\mu}\frac{d}{dt}
        D_bF
        \\
        &= - \mu \zeta - I_h D_vF(v;b)W + \frac{1}{\mu}\frac{d}{dt}
        D_bF
        ,
    \end{align}
    where we have used the fact that $I_h\zeta = \zeta$.
Taking the inner product with $\zeta$ leads to
\begin{equation}
    \frac{1}{2}\frac{d}{dt}\|\zeta\|^2 = -\mu \|\zeta\|^2 - \left\langle I_h  D_vF(v;b)w,\zeta\right\rangle + \frac{1}{\mu}\left\langle \frac{d}
    {dt}
    D_bF
    ,\zeta\right\rangle.
\end{equation}

Before proceeding further, we note that
\begin{equation}
    \frac{d}{dt}
    D_bF
    = D_b\{D_vF(v;b)\}\frac{dv}{dt}.
\end{equation}
Then it follows from Lemma \ref{lem:dvdt_bounded} that 
\begin{equation}
    \left\|\frac{d}{dt}
    D_bF
    \right\| \leq L_{Fb}
    \left( C_v +   \mu e^{-\mu t}E_0
    \right).
\end{equation}

From Lemma \ref{lemma:Wsmall} 
we also see that $\|W\| \leq \frac{C_w}{\mu}$ so that
\begin{align}
\left\|I_hD_vF(v;b)W\right\| &\leq \|D_vF(v;b)W\|
\leq L_F\frac{C}{\mu}.
\end{align}
Combining all of this together yields
\begin{align}
    \frac{d}{dt}\|\zeta\| &\leq -\mu\|\zeta\| + 
    \frac{1}{\mu} \left( L_F C_w +
    L_{Fb} C_v
    \right)
    +L_{Fb} E_0 e^{-\mu t} 
    .
\end{align}
Multiplying both sides of this inequality by $e^{\mu t}$ and integrating, we arrive at
\begin{equation}
    \|\zeta(t)\| \leq \|\zeta(0)\|e^{-\mu t} + 
    \frac{1}{\mu^2} \left( 
     L_F C_w +
    L_{Fb} C_v
    \right)
    + L_{Fb} E_0 t e^{-\mu t}
    .
    \label{eq:AsymFinalResult}
\end{equation}
Then we have 
\begin{align}
    \| \zeta(0)\|
    &= \left\| I_hW(0)
    + \frac{1}{\mu}D_bF
    \right\|
    \notag\\
    &\leq 
    \frac{1}{\mu} 
    \left(
    C_0
    + C_F
    \right).
\end{align}
For $t\geq \frac{2}{\mu} \ln \mu$, and using $\mu>2$, we have
\begin{align}
    e^{-\mu t} \leq \frac{1}{\mu^2},
    \quad te^{-\mu t} \leq \frac{2\ln \mu}{\mu^3}
    \leq \frac{1}{\mu^2}
\end{align}
which gives
\begin{align}
    \|\zeta(t)\| \leq 
    \frac{1}{\mu^2}
    \left(
    \frac{1}{\mu}(C_0 + C_F)
    + L_FC_w + L_{Fb}\left( 
    C_v + E_0
    \right)
    \right).
\end{align}
\end{proof}

\section{Optimization Guarantees with Asymptotic Gradients}\label{sec:rigorous_optimize}

Define the asymptotic sensitivity as
\begin{align}
    I_hW_{\text{asymp}}
    = 
    -\frac{1}{\mu} I_h\{D_bF\}.
\end{align}
As we have established, 
\begin{align}
    \| I_hW - I_hW_{\text{asymp}} \|
    \leq \frac{C_\zeta}{\mu^2},
\end{align}
where $W$ is the Jacobian sensitivity and this is established
for large $\mu$, where $C_\zeta$ is a constant independent of $\mu$.
This leads to the following approximation of the gradient of the error function 
\begin{align}
    \nabla_{\text{asymp}}
    E = \langle I_hW_{asymp}, I_h( v- u) \rangle.
\end{align}
This will have the following error estimate 
\begin{align}
    |\nabla E - \nabla_{\text{asymp}}E |
    \lesssim \frac{1}{\mu^2}
    \|I_h(v-u)\|
    = \frac{1}{\mu^2} \sqrt{ 2 E}
    ,
\end{align}
which tells us that at the minimum we have 
\begin{align}
    \|\nabla E - \nabla_{\text{asymp}}E \|_{b = \text{argmin} E}
    \lesssim \frac{1}{\mu^2}
     \sqrt{ \min E}.
\end{align}

From this, we see that using the modified gradient rather than the true gradient modifies the convergence of standard gradient based optimization methods.
Because of this we can no longer expect a theoretical guarantee of exact convergence to the true minimizer.
Instead if we write $b^* = \text{argmin}E$ we find convergence 
\begin{align}
    E(b^k)
    - E(b^*)
    \lesssim \frac{1}{\mu^2} E(b^*),
\end{align}
for $k$ chosen large enough as shown below. 
Despite this, in practice we find that for appropriately chosen $\mu$ and $T$ 
we often find convergence to within machine error of the correct value of the parameter (see e.g. \cite{newey2025model}).
We will restrict our attention to gradient descent, as other first and second order methods yield similar results via a messier calculation.

\subsection{Polyak-Lojasiewicz inequality}
If the loss function $E(b)$ is strictly (or even strongly) convex then convergence of gradient descent under a suitable choice of the learning rate is nearly automatic.  Rather than relying on the stricter condition of convexity, we will focus on loss functions which satisfy the Polyak-Lojasiewicz (PL) inequality with constant $0 < \lambda < \infty$, i.e.
\begin{equation}\label{eq:PL_condition}
    E(b) - E(b^*) \leq \frac{1}{2\lambda}|\nabla E(b)|^2.
\end{equation}

Loss functions which satisfy the PL-condition are nearly convex in a local sense (it is easily shown that strong convexity implies \eqref{eq:PL_condition}).  
Hence, we will focus on the PL-condition rather than any of several convexity conditions (see \cite{garrigos2023handbook} for a thorough description).  
Because the data assimilation algorithm can only be run over a finite time window $E$ will not have an exact zero at the correct value of the parameter. 
However, it generally will still be true that $E(b^*)$ is very small (e.g. at the order of machine error) so long as $T$ is chosen large enough.
In this case we can also consider, rather than an exact PL condition, an approximate PL condition given by
\begin{align}
     \left(
    E(b) -E(b^*)
    \right)
    \leq \frac{1}{2\lambda } |\nabla E(b) |^2
    + C E(b^*).
\end{align}
In the cases we consider, the constant $C = \mathcal{O} (1/\mu^2)$ which gives us further control over the remainder term.

\begin{theorem} \label{thm:PL_approx}
    For the loss function \eqref{eq:error_func} coupled to the system \eqref{eq:truth}-\eqref{eq:nudge} which satisfies assumptions     \ref{assumption:projection_obs}-\ref{assumption:jacobian_dissipation} with $\mu >\max\{\mu_0,2\}$ and $t > \frac{2}{\mu}\ln\mu$, and where the asymptotic representation of the observed sensitivity $I_h D_b v(t;b) = I_h W \sim  \frac{1}{\mu}G(t;b)$, where $G(t;b) = - I_h D_b F(v(t);b)$ also satisfies the observable identifiability condition 
    \begin{align}
        |\langle G,I_h(v-u) \rangle|^2
        \geq \chi 
        \left( \|I_h(v-u)\|^2\big|_b
        - \|I_h(v-u)\|^2\big|_{b^*}
        \right)
        = 2\chi 
        \left(
        E(b) - E(b^*)
        \right)
        ,
        \label{eq:asymp_grad_obs}
    \end{align}
    for some $0 < \chi < \infty$ and all $b \in B$, then 
    \begin{align}
        \left(
        \frac{\chi}{2 \mu^2}
        - \frac{C_\zeta^2}{\mu^4}
        \right)
        \left( E(b) -E(b^*) \right)
        \leq 
        \frac{1}{2} |\nabla E(b)|^2
        + \frac{C_\zeta^2}{\mu^4} E(b^*).
        \label{eq:approxPL_mu}
    \end{align}
\end{theorem}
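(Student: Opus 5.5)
The plan is to split the true gradient into its asymptotic part plus a remainder controlled by Theorem \ref{thm:asymptotic_sensitivity}, and then use the observable identifiability condition \eqref{eq:asymp_grad_obs} on the asymptotic part. Write $e = I_h(v-u)$ evaluated at $t=T$, so that $E(b) = \frac12\|e\|^2$. Since $I_h$ is an orthogonal projection (Assumption \ref{assumption:projection_obs}), we have $\nabla E(b) = (I_hW)^T e$. Set $\zeta = I_hW - \frac{1}{\mu}G$. Then
\begin{align}
    \nabla E(b) = \frac{1}{\mu}\langle G, e\rangle + \langle \zeta, e\rangle,
\end{align}
where $\langle G,e\rangle$ denotes the parameter-space vector $G^Te$. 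For $t > \frac{2}{\mu}\ln\mu$ and $\mu > \max\{\mu_0,2\}$, Theorem \ref{thm:asymptotic_sensitivity} gives $\|\zeta\| \leq C_\zeta/\mu^2$. By Cauchy--Schwarz,
\begin{align}
    |\langle \zeta, e\rangle| \leq \frac{C_\zeta}{\mu^2}\|e\| = \frac{C_\zeta}{\mu^2}\sqrt{2E(b)}.
\end{align}

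Next I lower bound $|\nabla E|^2$ by the elementary inequality $|x+y|^2 \geq \frac12|x|^2 - |y|^2$, which follows from $|x|^2 \leq 2|x+y|^2 + 2|y|^2$. Taking $x = \frac{1}{\mu}\langle G,e\rangle$ and $y = \langle\zeta,e\rangle$ yields
\begin{align}
    |\nabla E(b)|^2 \geq \frac{1}{2\mu^2}|\langle G,e\rangle|^2 - \frac{2C_\zeta^2}{\mu^4}E(b)
    \geq \frac{\chi}{\mu^2}\bigl(E(b)-E(b^*)\bigr) - \frac{2C_\zeta^2}{\mu^4}E(b),
\end{align}
where the second step uses \eqref{eq:asymp_grad_obs}. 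Writing $E(b) = \bigl(E(b)-E(b^*)\bigr) + E(b^*)$ and dividing by $2$ gives
\begin{align}
    \frac12|\nabla E(b)|^2 \geq \left(\frac{\chi}{2\mu^2} - \frac{C_\zeta^2}{\mu^4}\right)\bigl(E(b)-E(b^*)\bigr) - \frac{C_\zeta^2}{\mu^4}E(b^*).
\end{align}
Rearranging this is exactly \eqref{eq:approxPL_mu}.

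The computation itself is routine; the step needing the most care is matching the constants to the claimed form. The factor $\frac12$ in the splitting inequality must produce precisely $\frac{\chi}{2\mu^2}$ and $\frac{C_\zeta^2}{\mu^4}$, and the Cauchy--Schwarz step must use the spectral norm of $\zeta$ as a map from parameter space, consistent with the operator-norm statement of Theorem \ref{thm:asymptotic_sensitivity}. Two further points need checking. First, the hypotheses of Theorem \ref{thm:asymptotic_sensitivity} must hold at $t=T$: this requires the initial-condition assumption \eqref{eq:InitCondit} and $T > \frac{2}{\mu}\ln\mu$. Second, the constant $C_\zeta$ must be uniform over $b\in B$. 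That uniformity follows because every constant in Lemmas \ref{lem:dvdt_bounded} and \ref{lemma:Wsmall} is bounded through the uniform quantities in Assumptions \ref{assumption:absorbing_ball}--\ref{assumption:jacobian_dissipation}, for example $|b-a| \leq 2R$-type bounds on the admissible set.
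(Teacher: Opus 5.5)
Your proof is correct and follows the paper's proof essentially line for line. It uses the same splitting $I_hW = \frac{1}{\mu}G + \zeta$, the same inequality $|x+y|^2 \geq \frac12|x|^2 - |y|^2$, the identifiability condition on the leading term, the bound $\|\zeta\|\leq C_\zeta/\mu^2$ on the remainder, and the rewriting $E(b) = (E(b)-E(b^*)) + E(b^*)$, all with identical constants. One small correction to your closing remark: $R$ bounds the state space, not the admissible set $B$. So uniformity of $C_\zeta$ in $b$ (which enters through $C_G = L_FC_z + C_F|b-a|$) needs an extra bound such as boundedness of $B$, something the paper also leaves implicit.
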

\begin{proof}
    First write out the sensitivity as $I_hw = \frac{1}{\mu}G + \zeta$ where we know from Theorem \ref{thm:asymptotic_sensitivity} that $\|I_h\zeta\| \leq \frac{C_\zeta}{\mu^2}$ for some finite positive constant $C_\zeta$. 
    It follows that we can write the gradient of the loss function as 
    \begin{align}
        \nabla E(b) = \frac{1}{\mu} \langle G , I_h (v-u) \rangle
        + \langle\zeta, I_h(v-u) \rangle.
    \end{align}
    Using Young's inequality, and Cauchy-Schwarz for generic vectors $x$ and $y$ we have
    \begin{align}
        \| x+ y\|^2 &\geq \|x\|^2 + 2 \langle x,y \rangle + \|y\|^2
        \notag\\
        &\geq \|x\|^2 + \|y\|^2 -2 \|x\| \|y\|
        \notag\\
        &\geq \frac{1}{2}\|x\|^2 -\|y\|^2.
    \end{align}
    Applying this to our problem we have
    \begin{align}
        \frac{1}{2}|\nabla E(b) |^2 
        &\geq \frac{1}{4 \mu^2}
        | \langle G,I_h(v-u) \rangle|^2
        - 
        \frac{1}{2}
        |\langle \zeta, I_h(v-u) \rangle |^2
        \notag\\
        &\geq 
        \frac{\chi}{4 \mu^2}
        \left( \|I_h(v-u)\|^2\big|_b
        - \|I_h(v-u)\|^2\big|_{b^*}
        \right)
        - \frac{C_\zeta^2}{2\mu^4}
        \|I_h(v-u)\|^2\big|_b
        \notag\\
        &=
        \frac{\chi}{2\mu^2}
        \left(
        E(b) - E(b^*)
        \right)
        - \frac{ C_\zeta^2}{\mu^4}
        E(b)
        \notag\\
        &=
        \left(
        \frac{\chi}{2\mu^2}
        -\frac{C_\zeta^2}
        {\mu^4}
        \right)
        \left(
        E(b) - E(b^*)
        \right)
        - \frac{ C_\zeta^2}{\mu^4} E(b^*)
        .
    \end{align}
\end{proof}

This result indicates that so long as the underlying dynamical system is sufficiently smooth and satisfies the primary Assumptions, then the prescribed $L^2$ loss function satisfies an (approximate) PL-condition.  
This holds as long as $\mu$ is sufficiently large so that the asymptotic representation of the sensitivity given in Theorem \ref{thm:asymptotic_sensitivity} holds, but as $\mu\rightarrow \infty$ then the constant in the PL-condition converges to $0$.  
This indicates that while the resultant gradient approximation is improved for large $\mu$, the resulting loss is flatter, i.e. gradient descent or a similar algorithm will need to select a larger step size for the optimization procedure to converge in a reasonable number of steps, particularly near the optimal value $b^*$.

\subsection{Gradient descent with the PL-condition}
We will consider gradient descent with the asymptotic approximation to the gradient, i.e.
\begin{align}
    b_{i,k+1} &= b_{i,k} - \gamma_k \nabla_{b_i,asymp}E(b_k)\\ \label{eq:asymp_grad_descent}
    &= b_{i,k} - \gamma_k\nabla_{b_i} E(b_k) + \gamma_k \langle v - u ,I_h \zeta_{b_i} \rangle.
\end{align}
In addition to the PL-condition, convergence of gradient descent will also require L-smoothness (Lipschitz of the gradient) of the loss $E(b)$.  For our setting, this is implied if we require that the first and second order sensitivities are bounded for all parameter choices $b_k$.

\begin{theorem} \label{thm:gradDescentConv}
    Suppose that $E(b)$ is $L_d$-smooth and satisfies the approximate PL-condition 
    of Theorem \ref{thm:PL_approx}
    with constant $0 < \lambda = \frac{\chi}{2\mu^2} - \frac{C_\zeta^2}{\mu^4} < L_d$.  
    If the learning rate at each step satsifies $\gamma_k  < \frac{1}{L_d} < \frac{1}{\lambda}$ 
    and $\mu$ is chosen so that
    \begin{equation}
        \mu^2 > \frac{4 C_\zeta^2}{\chi},
    \end{equation}
    then
    \begin{equation}
        E(b_{k+1}) - E(b^*) \leq \left(1- \gamma_k\frac{\chi \mu^2 - 4C^2_\zeta}{2\mu^4}\right) (E(b_k) - E(b^*)) 
        + \gamma_k \frac{2C^2_\zeta}{\mu^4}E(b^*),
    \end{equation}
    which implies that so long as the learning rate is also chosen at each step to satisfy \begin{equation}\label{eq:learning_rate_restrict}
        \gamma_k < \frac{2\mu^4}{\chi\mu^2 - 4C^2_\zeta},
    \end{equation}
    and $\gamma_k > \gamma >0$  (i.e. $\gamma_k$ is bounded uniformly away from zero) for all $k$ then
    \begin{equation}
        \limsup_{k\rightarrow\infty}\left[E(b_k)-E(b^*)\right] \leq \frac{4C^2_\zeta}{\chi\mu^2 - 4C^2_\zeta}E(b^*).
    \end{equation}
\end{theorem}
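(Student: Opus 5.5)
The plan follows the standard descent-lemma argument for gradient descent under a PL inequality, with two modifications: the true gradient is replaced by the asymptotic gradient, and the approximate PL inequality of Theorem \ref{thm:PL_approx} is used in place of the exact one. Write the update as $b_{k+1} = b_k - \gamma_k g_k$, where $g_k = \nabla_{\text{asymp}}E(b_k) = \nabla E(b_k) - e_k$. By \eqref{eq:asymp_grad_descent} and Theorem \ref{thm:asymptotic_sensitivity}, the error satisfies
\begin{align}
|e_k| \leq \frac{C_\zeta}{\mu^2}\|I_h(v-u)\| = \frac{C_\zeta}{\mu^2}\sqrt{2E(b_k)}.
\end{align}

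\textbf{Descent step.} First apply $L_d$-smoothness:
\begin{align}
E(b_{k+1}) \leq E(b_k) - \gamma_k\langle \nabla E, g_k\rangle + \frac{L_d\gamma_k^2}{2}|g_k|^2 .
\end{align}
Since $\gamma_k<1/L_d$, the last term is at most $\frac{\gamma_k}{2}|g_k|^2$. Substituting $g_k=\nabla E - e_k$ and expanding gives
\begin{align}
-\langle \nabla E,g_k\rangle+\frac12|g_k|^2 = -\frac12|\nabla E|^2+\frac12|e_k|^2 .
\end{align}
Hence
\begin{align}
E(b_{k+1}) \leq E(b_k) - \frac{\gamma_k}{2}|\nabla E(b_k)|^2 + \frac{\gamma_k}{2}|e_k|^2,
\end{align}
with $|e_k|^2 \leq \frac{2C_\zeta^2}{\mu^4}E(b_k)$.

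\textbf{Inserting the PL bound.} Next, lower-bound $\frac12|\nabla E|^2$ using \eqref{eq:approxPL_mu}. This yields
\begin{align}
-\frac{\gamma_k}{2}|\nabla E|^2 \leq -\gamma_k\Big(\frac{\chi}{2\mu^2}-\frac{C_\zeta^2}{\mu^4}\Big)(E(b_k)-E^*) + \gamma_k\frac{C_\zeta^2}{\mu^4}E^* .
\end{align}
For the error term, write $E(b_k) = (E(b_k)-E^*) + E^*$, so that
\begin{align}
\frac{\gamma_k}{2}|e_k|^2 \leq \gamma_k\frac{C_\zeta^2}{\mu^4}(E(b_k)-E^*) + \gamma_k\frac{C_\zeta^2}{\mu^4}E^* .
\end{align}
Subtract $E^*$ from both sides of the descent inequality and collect terms. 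The coefficient of $E(b_k)-E^*$ becomes
\begin{align}
1-\gamma_k\Big(\frac{\chi}{2\mu^2}-\frac{2C_\zeta^2}{\mu^4}\Big)=1-\gamma_k\frac{\chi\mu^2-4C_\zeta^2}{2\mu^4},
\end{align}
and the $E^*$ terms sum to $\gamma_k\frac{2C_\zeta^2}{\mu^4}E^*$. This is exactly the claimed one-step inequality. The hypothesis $\mu^2>4C_\zeta^2/\chi$ makes the contraction rate positive.

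\textbf{Passing to the limit.} Set $\rho_k=\gamma_k\frac{\chi\mu^2-4C_\zeta^2}{2\mu^4}$ and $\delta_k=E(b_k)-E^*$. The restriction \eqref{eq:learning_rate_restrict} is equivalent to $\rho_k<1$, and $\gamma_k>\gamma$ gives $\rho_k\geq\rho:=\gamma\frac{\chi\mu^2-4C_\zeta^2}{2\mu^4}>0$. The recursion reads $\delta_{k+1}\le(1-\rho_k)\delta_k+\rho_k K$, where
\begin{align}
K=\frac{2C_\zeta^2/\mu^4}{(\chi\mu^2-4C_\zeta^2)/(2\mu^4)}E^*=\frac{4C_\zeta^2}{\chi\mu^2-4C_\zeta^2}E^* .
\end{align}
Then $\delta_{k+1}-K\le(1-\rho_k)(\delta_k-K)$. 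Whenever $\delta_k-K>0$, this difference contracts by a factor of at most $1-\rho$. Once it becomes nonpositive it stays nonpositive, because $1-\rho_k>0$. Therefore $\limsup_k \delta_k\le K$, which is the stated bound.

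\textbf{Main obstacle.} The delicate step is controlling $|e_k|$ along the iterates. It requires Theorem \ref{thm:asymptotic_sensitivity} to hold uniformly at every $b_k$, with $T>\frac{2}{\mu}\ln\mu$ and a $C_\zeta$ independent of $b$. It also requires converting the error bound into a multiple of $E(b_k)$ rather than a fixed constant. I would state this uniformity in $b\in B$ explicitly, using the uniform bounds in Assumption \ref{assumption:technical} together with the uniform absorbing ball, and would assume that the iterates remain in $B$.
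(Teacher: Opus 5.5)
Your proof is correct and follows essentially the same route as the paper. Both arguments combine the descent lemma, the bound $|\langle v-u, I_h\zeta\rangle| \le \frac{C_\zeta}{\mu^2}\sqrt{2E(b_k)}$, the intermediate estimate $E(b_{k+1}) \le (1+\gamma_k C_\zeta^2/\mu^4)E(b_k) - \frac{\gamma_k}{2}|\nabla E(b_k)|^2$, and then the approximate PL inequality; the only differences are that you reach the intermediate estimate via $L_d\gamma_k\le 1$ and the exact identity $-\langle \nabla E, g_k\rangle + \frac12|g_k|^2 = -\frac12|\nabla E|^2 + \frac12|e_k|^2$ instead of the paper's Cauchy--Schwarz/Young step, and that you write out the $\limsup$ argument the paper leaves implicit.
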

\begin{proof}
    To simplify the presentation of this proof, we will use $\cdot$ to indicate the dot product on parameter space. 
    First, $L_d$-smoothness provides us with a descent lemma (see \cite{garrigos2023handbook}) which states that
    \begin{align}
        E(b_{k+1}) &\leq E(b_k) +  \nabla E(b_k) \cdot (b_{k+1}-b_k) + \frac{L_d}{2}
        |b_{k+1} - b_k|^2.
    \end{align}
    Inserting the asymptotic gradient descent update for $b_{k+1}$ given by \eqref{eq:asymp_grad_descent} and rearranging terms appropriately, we see that
    \begin{align}
        E(b_{k+1}) &\leq E(b_k) - \gamma_k |\nabla E(b_k)|^2 + \frac{L_d}{2}\gamma^2_k
        |\nabla E(b_k)|^2\\
        &+ \gamma_k (1-L_d\gamma_k)
        \nabla E(b_k) \cdot 
        \langle
        v - u, I_h \zeta \rangle 
        + \frac{L_d\gamma^2_k}{2}
        | \langle v- u, I_h \zeta \rangle |^2
        \notag\\
        &\leq 
       E(b_k) - \gamma_k |\nabla E(b_k)|^2 
       + \frac{L_d}{2}\gamma^2_k |\nabla E(b_k)|^2
        \notag\\
        &+
        \frac{C_\zeta}{\mu^2}
        \gamma_k 
        (1-L_d \gamma_k) |\nabla E(b_k)|
        \sqrt{2E(b_k)}
        + L_d\gamma^2_k
        \frac{C^2_\zeta}{\mu^4}
        E(b_k)
        .
    \end{align}
    Combining like terms, and using Young's inequality with weight $\frac{C}{\mu^2}$ leads to
    \begin{align}
        E(b_{k+1}) &\leq 
        \left(1 + \frac{\gamma_k C^2_\zeta}{\mu^4}\right)
        E(b_k) 
        - \frac{\gamma_k}{2}|\nabla E(b_k)|^2.
    \end{align}
    Now, using the approximate PL-condition given in equation \eqref{eq:approxPL_mu} we have
    \begin{align}
        &E(b_{k+1}) - E(b^*) 
        \leq
        \left(1 + \frac{\gamma_k C^2_\zeta}{\mu^4}\right)
        E(b_k)  
        - E(b^*)
        \notag\\
        &-\gamma_k
        \left(\frac{\chi}{2\mu^2}
        - \frac{C_\zeta^2}{\mu^4}
        \right)
        \left( E(b_k) - E(b^*)\right)
        + \gamma_k\frac{C_\zeta^2}{\mu^4}E(b^*)
        \notag\\
        &\leq \left(1 -\gamma_k \frac{\chi \mu^2-  4 C^2_\zeta}{2\mu^4}\right)
        \left( E(b_k) - E(b^*) \right)
        + \gamma_k\frac{2 C_\zeta^2}{\mu^4}E(b^*).
    \end{align}
\end{proof}

\subsection{In-line Parameter Estimation} \label{sec:inline}

The approach outlined so far in this paper is to apply parameter estimation by running the nudging algorithm repeatedly over the same time interval.
This is primarily to simplify the presentation.
This general approach for estimating parameters was originally developed in-line with the continuous data assimilation algorithm \cite{newey2025model}.
The approach described in \cite{newey2025model} (following approaches in \cite{CHL_2020,Lorenz_2021}) was to apply the parameter estimation in epochs (see \cite{pachev2022concurrentmultiparameterlearningdemonstrated} for a related approach in which the parameter estimates are instead updated continuously in time).
The data assimilation algorithm was run over intervals, where the system was allowed to relax according to Lemma \ref{lem:bounded_error}.
At the end of the interval a parameter estimation step was applied using the asymptotic gradient approximation described here.

If we assume that the assumptions used in this paper hold uniformly for all time rather than over a fixed time interval, then it is straightforward to extend the asymptotic gradient used here.
We sketch this approach below; consider a sequence of error functions $\{E_k(b)\}$ defined by 
\begin{align}
    E_k(b) = \frac{1}{2}
    \|I_h(v(t;b) -u(t))\|^2 
    \bigg|_{t=t_k}, 
\end{align}
where $\{t_k\}$ is an increasing sequence of times with $t_k - t_{k-1} > \frac{2}{\mu} \ln\mu$ (it takes the place of $T$ in this paper).

The initial condition for $W$ at the beginning of each epoch will no longer be zero. Instead we will have 
\begin{align}
    D_{b_{k+1}} v(t_k;b_{k})
    = 
    D_{b_{k}}v(t_k;b_k) D_{b_{k+1}} b_k .
\end{align}
If we use the gradient descent algorithm (other optimization algorithms are similar) we have 
\begin{align}
    b_{k+1} = b_k + \frac{\gamma_k}{\mu}  
    \langle v(t_k,b_k) - u(t_k), D_b F \rangle.  
\end{align}
This gives 
\begin{multline}
    D_{b_{k}} b_{k+1}
    = I \\
    + \frac{\gamma_k}{\mu}
    \left(
    \langle D_{b_k} v(t_k,b_k),
    D_{b_k} F \rangle
    + \langle v(t_k,b_k) - u(t_k),
    D_{b_k}^2 F 
    + [D_v D_{b_k} F] W
    \rangle
    \right).
\end{multline}
So long as the quantity in the parentheses is bounded (requiring additional regularity on $F$ and its derivatives) then
\begin{align}
    D_{b_k} b_{k+1} -I = \mathcal{O}\left(\frac{1}{\mu}\right).
\end{align}
Using a Neumann series we can see that 
\begin{align}
    D_{b_{k+1}} b_k - I= 
    (D_{b_k} b_{k+1})^{-1} -I = \mathcal{O}\left(\frac{1}{\mu}\right),
\end{align}
so long as $\gamma_k$ is chosen order 1 in $\mu$.
By Lemma \ref{lemma:Wsmall} the sensitivity at the end of each epoch will be $\mathcal{O}(1/\mu)$, and 
from this, the initial conditions of the sensitivity at the beginning of each epoch will also be $\mathcal{O}(1/\mu)$, and the results of section \ref{sec:rigorous_asymptotics} apply.
Then by our assumptions, Theorem \ref{thm:PL_approx} holds with constants defined uniformly, we can apply Theorem \ref{thm:gradDescentConv} to each epoch's error function $E_k$ and each step of the parameter estimation.
A challenge still remains because each $E_k$ will be a different function potentially with a different minimizer $b^*_k$.
We leave the work of extending the proof of gradient descent to the in-line approach to future work.

\section{Assumptions applied to Lorenz '63}\label{sec:examples}
To clarify the impact of the assumptions used in this paper, we consider the specific example of the Lorenz '63 system \cite{lorenz1963deterministic} where \eqref{eq:truth} is given by
\begin{align}
    \frac{dx}{dt} &= \sigma (y-x),\\
    \frac{dy}{dt} &= x(\rho-z)-y,\\
    \frac{dz}{dt} &= xy-\beta z,
\end{align}
where the observation operator is defined as
\begin{equation}
    I_h\begin{pmatrix}
        x\\y\\z
    \end{pmatrix} =
    \begin{pmatrix}
        0 & 0 & 0\\
        0 & 1 & 0\\
        0 & 0 & 0
    \end{pmatrix}\begin{pmatrix}
        x\\y\\z
    \end{pmatrix} = \begin{pmatrix}
        0\\y\\0
    \end{pmatrix},
\end{equation}
i.e. we can observe only the $y$ variable.  For convergence of nudging, observation of other combinations of $x,y,$ and $z$ are also valid except for observation of $z$ only.  The other cases follow similar to the derivation provided here so we omit the details.

We now turn to parameter estimation on this system.
We seek to estimate the parameter $\rho$, this is the only parameter we can estimate given the observation operator chosen here due to the restriction $J_h D_bF = 0$.
Making different choices allows for different sets of parameters to be estimated (see e.g. \cite{Lorenz_2021}).

To clarify the notation set up in \eqref{eq:truth}, we note that for this specific problem,
\begin{align}
    F(x,y,z;\rho) = -\begin{pmatrix}
        \sigma (y-x)\\
        x(\rho-z) - y\\
        xy-\beta z
    \end{pmatrix}.
\end{align}
In this setting, the sensitivities are defined as
\begin{equation}
    w_1 = \frac{\partial x}{\partial \rho},\quad w_2 = \frac{\partial y}{\partial \rho}, \quad w_3 = \frac{\partial z}{\partial \rho},
\end{equation}
and correspondingly satisfy
\begin{align}
    \frac{dw_1}{dt} &= \sigma (w_2 - w_1),\\
    \frac{dw_2}{dt} &= x(1-w_3) + w_1(\rho - z) - w_2 - \mu w_2,\\
    \frac{dw_3}{dt} &= xw_2 + w_1y - \beta w_3.
\end{align}
Notice that the Jacobian is (in the notation of the previous sections)
\begin{equation}
    -D_vF =
    \begin{pmatrix}
        \sigma & -\sigma & 0\\
        -\rho + z & 1 & x\\
        -y & -x & \beta
    \end{pmatrix},
\end{equation}
with inhomogeneous term $(0,x,0)^T$.

\subsection{Connection with key Assumptions}
Assumption \ref{assumption:projection_obs} is clearly satisfied in this setting where the observation operator and orthogonal complement can be defined as:
\begin{equation}
    I_h = \begin{pmatrix}0 & 0 & 0\\ 0 & 1 & 0\\ 0 & 0 & 0\end{pmatrix},\quad J_h = \begin{pmatrix}
        1 & 0 & 0\\0 & 0 & 0\\0 & 0 & 1
    \end{pmatrix}.
\end{equation}
Similarly Assumption \ref{assumption:observable_parameters} is easily verified:
\begin{equation}
    J_h D_\rho F = J_h\frac{\partial F}{\partial \rho} = \begin{pmatrix}
        1 & 0 & 0\\
        0 & 0 & 0\\
        0 & 0 & 1
    \end{pmatrix}\begin{pmatrix}
        0\\ -x\\ 0
    \end{pmatrix} = \begin{pmatrix}
        0\\0\\0
    \end{pmatrix}.
\end{equation}
For the Lorenz '63 system every solution of \eqref{eq:truth} eventually enters, and remains in, a bounded ellipsoid (see \cite[Appendix C]{sparrow2012lorenz}), and a similar energy estimate bounds the difference between the nudged and reference solutions (\cite[Section 4.1]{Lorenz_2021} proves such an estimate when $x$ is nudged, and \cite[Remark 4.7]{Lorenz_2021} states that slight modifications cover the other observed combinations); together these give Assumption \ref{assumption:absorbing_ball}, and Assumption \ref{assumption:technical} then follows from the polynomial form of $F$ on the resulting bounded set.  

To establish Assumption \ref{assumption:jacobian_dissipation} we demonstrate that this system satisfies the hypotheses of Lemma \ref{lem:jacobian_dissipation} as follows.  First, for any vector $\Pi = (\pi_1,\pi_2,\pi_3)^T$ note that
\begin{equation}
    \|J_h\Pi\|^2 = \pi_1^2+\pi_3^2.
\end{equation}
Now,
\begin{equation}
    P_J = \begin{bmatrix}
        \Lambda & 0 & 0\\0 & 0 & 0\\
        0 & 0 & 1
    \end{bmatrix},
\end{equation}
so that
\begin{align}
     J_h\{D_vF\}J_h &= \begin{bmatrix}\sigma & 0 & 0 \\
    0 & 0 & 0\\
    -y & 0 & \beta
    \end{bmatrix}\\
    \Rightarrow P_JJ_h\{D_vF\}J_h\Pi &= \begin{bmatrix}
        \sigma \Lambda & 0 & 0\\
        0 & 0 & 0\\
        -y & 0 & \beta
    \end{bmatrix}\Pi = \begin{bmatrix}
        \sigma\Lambda \pi_1\\
        0\\
        -y\pi_1 + \beta\pi_3
    \end{bmatrix},\\
    \Rightarrow \langle P_JJ_h\{D_vF\}J_h \Pi,J_h \Pi\rangle &= \sigma \Lambda \pi_1^2 - y\pi_1\pi_3 + \beta\pi_3^2 \geq \frac{\gamma}{2}\|J_h\Pi\|^2,
\end{align}
so long as 
\begin{equation}
    \Lambda > \frac{Y^2}{4\sigma\beta},\quad\mbox{and}\quad \gamma = \sigma\Lambda + \beta - \sqrt{(\sigma\Lambda-\beta)^2 +Y^2},
\end{equation}
where $Y$ is chosen such that
\begin{equation}
    |y(t)| < Y,
\end{equation}
for all $t>0$ and all reasonable choices of $\rho$.

Finally, we note that this setting also satisfies the localized observable identifiability condition as well.  In this case,
\begin{equation}
    G = -I_h D_\rho F = \begin{bmatrix}
        0 \\ x \\ 0
    \end{bmatrix},
\end{equation}
so that if $v = (x,y,z)^T$ and $u=(\tilde{x},\tilde{y},\tilde{z})^T$ then this condition becomes
\begin{align}
    G^TI_h(v-u) &= x(y-\tilde{y}),\\
    \Rightarrow \|G^TI_h(v-u)\|^2 &= x^2(y-\tilde{y})^2 ,
\end{align}
and since this is evaluated at the final time $t=T$ this is valid only at times when $x(T)$ is bounded away from zero.  In other words the identifiability of $\rho$ is valid for $y$-observations only for end times $t=T$ when $x(T)$ is not near the origin which is akin to the identifiability condition utilized in \cite{Lorenz_2021}.

\subsection{Comparison with key Theorems}
Figure \ref{fig:gradient_comparisons} compares the observable asymptotic sensitivity against a direct simulation of the observable sensitivity equations for the Lorenz '63 system, demonstrating the scaling predicted by Theorem \ref{thm:asymptotic_sensitivity}.  The figure displays three different options.  For parameter estimation, full observation of all the state variables is a trivial exercise, but we include this case in Figure \ref{fig:gradient_comparisons} to demonstrate the scaling of the asymptotic sensitivity anyways.  Note that the asymptotic representation of the sensitivity performs much better for complete observations and even when observing 2/3 of the state variables.  As shown in the right panel, the $O(\mu^{-2})$ scaling is apparent even when observing only one of the three state variables, but the scaling doesn't hold until higher $\mu$ values.

\begin{figure}
    \centering
    \includegraphics[width=\linewidth]{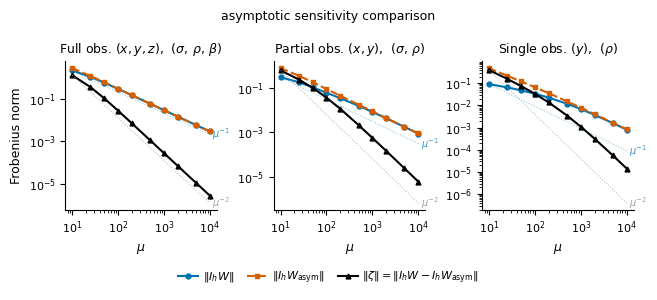}
    \caption{Comparison of the asymptotic sensitivities \eqref{eq:asymptotic} against the sensitivity computed via direct simulation of \eqref{eq:sensitivity} for the Lorenz '63 system.  Note that both the asymptotic sensitivity and the simulated sensitivity scale as $O(\mu^{-1})$ and the error between the approximations scale as $O(\mu^{-2})$.  In each case, the unknown parameters are exactly those that fall on the observable part of the state evolution equation.}
    \label{fig:gradient_comparisons}
\end{figure}

Figure \ref{fig:PL_comparison} demonstrates the realizability of the conditions in Theorem \ref{thm:PL_approx} by numerically computing the ratio of the squared gradient of the loss function relative to two times the loss function itself (in this case $E(b^*) \approx 0$).  As in the comparison described for Figure \ref{fig:gradient_comparisons}, the more part of the state that is observed, the closer the asymptotic result stated in Theorem \ref{thm:PL_approx} is to being realized.

\begin{figure}
    \centering
    \includegraphics[width=\linewidth]{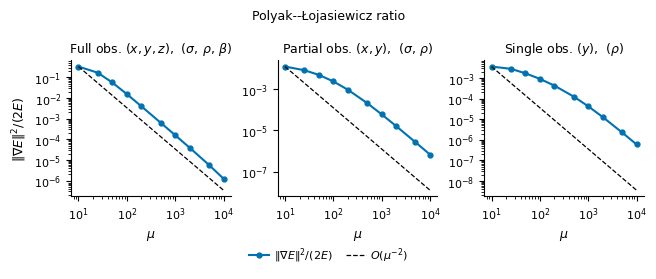}
    \caption{Numerical comparison for the PL-inequality for the Lorenz '63 system.}
    \label{fig:PL_comparison}
\end{figure}

Both of these Figures only consider two different circumstances where the observations aren't complete, but similar behavior is seen for the observable sensitivities (i.e. those with $J_h D_b F =0$) for other combinations of the observable state such as observations of $x$ and $z$ or observations of $x$ only.  As is well noted for this system (see \cite{balmforth1997master} for instance), observation of $z$ only is not sufficient for recovering the full state due to the inherent symmetry in the attractor for this system.

\section{Conclusions}\label{sec:conclusions}
We have presented a rigorous justification for a modification of the algorithm developed in \cite{newey2025model}.  The rigorous results presented here are stated for parameter updates over the same time interval, i.e. we have assumed that the state is simulated over the same interval $[0,T]$ between each update whereas \cite{newey2025model} focused on parameter updates on the fly or as the data was observed over a changing time interval.  Numerically, in experiments not reported here, we have seen very little, if any, difference between these two approaches. 
The PL-condition follows immediately from the identifiability condition once Theorem \ref{thm:asymptotic_sensitivity} is established.
We leave the rigorous justification of the sequential parameter updates for future work.  

Replacing \eqref{eq:error_func} with a time average as in \cite{schill2026asymptotic,ahmad2026dataassimilationaugmented} is another possibility not explicitly addressed here.  Even so, we note that all of the results in Section \ref{sec:rigorous_asymptotics} will still apply for a time-averaged loss as this doesn't change the need to approximate the parameter sensitivities, and similar estimates to those provided in Section \ref{sec:rigorous_optimize} will hold with some minor modifications for the loss function itself.

It is also worth noting that the asymptotic results of Section \ref{sec:rigorous_asymptotics} are not restricted to an $L^2$ loss function such as \eqref{eq:error_func} but provide a reasonable surrogate gradient for any other loss function dependent on the observable error.  This opens up the possibility of using the approximate sensitivities for other losses where physical considerations warrant a different type of loss.

\section*{Acknowledgments}
JPW was partially supported by NSF grant DMS-2510495 and CCF-2343286.  ChatGPT and Claude were used as proof checks, and Claude Code was used to assist in the reported simulations and production of the two Figures included here.

% \se

%%%%%%%%%%%%%%%%%%%%%%%%%%%%%%%%%%%%%%%%%%%%%%%%%%%%%%%%%%%%%%%
%**************************************************************
%%%%%%%%%%%%%%%%%%%%%%%%%%%%%%%%%%%%%%%%%%%%%%%%%%%%%%%%%%%%%%%

\bibliographystyle{plain}
\bibliography{Bibliography}

@article{ning2024assessment,
  title={An assessment of ensemble {K}alman filter and {A}zouani--{O}lson--{T}iti algorithms for data assimilation: A comparative study},
  author={Ning, Ning and Victor, Collin},
  journal={Communications in Computational Physics},
  volume={41},
  number={1},
  pages={213--236},
  year={2027},
  doi={10.4208/cicp.OA-2024-0167}
}

@article{titi2025inadequacy,
  title={On the inadequacy of nudging data assimilation algorithms for non-dissipative systems},
  author={Titi, Edriss S. and Victor, Collin},
  journal={Discrete and Continuous Dynamical Systems -- S},
  volume={26},
  pages={477--507},
  year={2026},
  doi={10.3934/dcdss.2025151}
}

@article{bessaih2025stochastic,
  title={Continuous data assimilation for {2D} stochastic {N}avier--{S}tokes equations},
  author={Bessaih, Hakima and Ferrario, Benedetta and Landoulsi, Oussama and Zanella, Margherita},
  journal={Journal of Mathematical Fluid Mechanics},
  volume={28},
  number={3},
  pages={62},
  year={2026},
  doi={10.1007/s00021-026-01043-4}
}

@article{hammoud2023continuous,
  title={Continuous and Discrete Data Assimilation with Noisy Observations for the {R}ayleigh--{B}{\'e}nard Convection: A Computational Study},
  author={Hammoud, Mohamad Abed El Rahman and Le Ma{\^i}tre, Olivier and Titi, Edriss S. and Hoteit, Ibrahim and Knio, Omar},
  journal={Computational Geosciences},
  volume={27},
  number={1},
  pages={63--79},
  year={2023},
  doi={10.1007/s10596-022-10180-4},
  url={https://doi.org/10.1007/s10596-022-10180-4},
  publisher={Springer}
}

@article{hammoud2024data,
  title={Data assimilation in chaotic systems using deep reinforcement learning},
  author={Hammoud, Mohamad Abed El Rahman and Raboudi, Naila and Titi, Edriss S. and Knio, Omar and Hoteit, Ibrahim},
  journal={Journal of Advances in Modeling Earth Systems},
  volume={16},
  number={8},
  pages={e2023MS004178},
  year={2024},
  publisher={Wiley Online Library},
  doi={10.1029/2023MS004178}
}

@article{balmforth1997master,
  title={Master-slave synchronization and the {L}orenz equations},
  author={Balmforth, Neil and Tresser, Charles and Worfolk, Patrick and Wu, Chai Wah},
  journal={Chaos: An Interdisciplinary Journal of Nonlinear Science},
  volume={7},
  number={3},
  pages={392--394},
  year={1997},
  publisher={American Institute of Physics},
  doi={10.1063/1.166212}
}

@article{desamsetti2022enhanced,
  title={Enhanced simulation of the {I}ndian summer monsoon rainfall using regional climate modeling and continuous data assimilation},
  author={Desamsetti, Srinivas and Dasari, Hari Prasad and Langodan, Sabique and Viswanadhapalli, Yesubabu and Attada, Raju and Luong, Thang M. and Knio, Omar and Titi, Edriss S. and Hoteit, Ibrahim},
  journal={Frontiers in Climate},
  volume={4},
  pages={817076},
  year={2022},
  publisher={Frontiers Media SA},
  doi={10.3389/fclim.2022.817076}
}

@article{hammoud2025downscaling,
  title={Downscaling using {CDAnet} under observational and model noise: the {R}ayleigh--{B}{\'e}nard convection paradigm},
  author={Hammoud, Mohamad Abed El Rahman and Titi, Edriss S. and Hoteit, Ibrahim and Knio, Omar},
  journal={Computational Geosciences},
  volume={29},
  number={1},
  pages={10},
  year={2025},
  publisher={Springer},
  doi={10.1007/s10596-024-10337-3}
}

@article{CDA_2_noise_2015,
   title={Continuous data assimilation with stochastically noisy data},
   volume={28},
   ISSN={1361-6544},
   url={http://dx.doi.org/10.1088/0951-7715/28/3/729},
   DOI={10.1088/0951-7715/28/3/729},
   number={3},
   journal={Nonlinearity},
   publisher={IOP Publishing},
   author={Bessaih, Hakima and Olson, Eric and Titi, Edriss S.},
   year={2015},
   month=feb, pages={729--753} }

@article{franz2022dynamic,
  title={The Bleeps, the Sweeps, and the Creeps: Convergence Rates for Dynamic Observer Patterns via Data Assimilation for the {2D} {N}avier--{S}tokes Equations},
  author={Franz, Trenton and Larios, Adam and Victor, Collin},
  journal={Computer Methods in Applied Mechanics and Engineering},
  volume={392},
  pages={114673},
  year={2022},
  doi={10.1016/j.cma.2022.114673},
  url={https://doi.org/10.1016/j.cma.2022.114673},
  publisher={Elsevier}
}

@article{biswas2021three,
  title={Continuous Data Assimilation for the Three-Dimensional {N}avier--{S}tokes Equations},
  author={Biswas, Animikh and Price, Randy},
  journal={SIAM Journal on Mathematical Analysis},
  volume={53},
  number={6},
  pages={6697--6723},
  year={2021},
  doi={10.1137/20M1323229},
  url={https://doi.org/10.1137/20M1323229},
  publisher={SIAM}
}

@article{larios2024ocean,
  title={Application of Continuous Data Assimilation in High-Resolution Ocean Modeling},
  author={Larios, Adam and Petersen, Mark R. and Victor, Collin},
  journal={Communications in Computational Physics},
  volume={35},
  number={5},
  pages={1418--1444},
  year={2024},
  doi={10.4208/cicp.OA-2023-0208},
  url={https://doi.org/10.4208/cicp.OA-2023-0208}
}

@article{larios2020approximate,
  title={Approximate continuous data assimilation of the {2D} {N}avier--{S}tokes equations via the {V}oigt-regularization with observable data},
  author={Larios, Adam and Pei, Yuan},
  journal={Evolution Equations and Control Theory},
  volume={9},
  number={3},
  pages={733--751},
  year={2020},
  doi={10.3934/eect.2020031}
}

@article{cibik2025adaptive,
  title={Adaptive Parameter Selection in Nudging Based Data Assimilation},
  author={{\c{C}}{\i}b{\i}k, Aytekin and Fang, Rui and Layton, William and Siddiqua, Farjana},
  journal={Computer Methods in Applied Mechanics and Engineering},
  volume={433},
  pages={117526},
  year={2025},
  doi={10.1016/j.cma.2024.117526},
  url={https://doi.org/10.1016/j.cma.2024.117526},
  publisher={Elsevier}
}

@article{martinez2025relaxation,
  title={Relaxation-based schemes for on-the-fly parameter estimation in dissipative dynamical systems},
  author={Martinez, Vincent R. and Murri, Jacob and Whitehead, Jared P.},
  journal={Inverse Problems},
  volume={41},
  number={5},
  pages={055001},
  year={2025},
  doi={10.1088/1361-6420/adc766},
  url={https://doi.org/10.1088/1361-6420/adc766},
  publisher={IOP Publishing}
}

@misc{ahmad2026dataassimilationaugmented,
  title={A Data-Assimilation-Augmented Optimization Framework for Parameter Estimation in Dynamical Systems},
  author={Ahmad, Muhammad Jalil and Biswas, Animikh and Hoffman, Kathleen},
  year={2026},
  eprint={2606.29291},
  archivePrefix={arXiv},
  primaryClass={math.DS},
  doi={10.48550/arXiv.2606.29291},
  url={https://arxiv.org/abs/2606.29291},
  howpublished={arXiv preprint arXiv:2606.29291}
}

@misc{wang2026comparing,
  title={Comparing Deterministic and Stochastic Parameter Recovery Algorithms Applied to Chaotic Systems},
  author={Wang, Ashley and Carlson, Elizabeth and Hoffmann, Franca},
  year={2026},
  eprint={2606.18568},
  archivePrefix={arXiv},
  primaryClass={nlin.CD},
  doi={10.48550/arXiv.2606.18568},
  url={https://arxiv.org/abs/2606.18568},
  howpublished={arXiv preprint arXiv:2606.18568}
}

@misc{li2026continuoussurrogate,
  title={Continuous Data Assimilation with Learned Surrogate Dynamics},
  author={Li, Wenwen and Sanz-Alonso, Daniel},
  year={2026},
  eprint={2606.00480},
  archivePrefix={arXiv},
  primaryClass={math.DS},
  doi={10.48550/arXiv.2606.00480},
  url={https://arxiv.org/abs/2606.00480},
  howpublished={arXiv preprint arXiv:2606.00480}
}

@article{sun2020optimization,
  title={Optimization for deep learning: An overview},
  author={Sun, Ruo-Yu},
  journal={Journal of the Operations Research Society of China},
  volume={8},
  number={2},
  pages={249--294},
  year={2020},
  publisher={Springer},
  doi={10.1007/s40305-020-00309-6}
}

@article{lorenz1963deterministic,
  title={Deterministic nonperiodic flow},
  author={Lorenz, Edward N.},
  journal={Journal of the Atmospheric Sciences},
  volume={20},
  number={2},
  pages={130--141},
  year={1963},
  doi={10.1175/1520-0469(1963)020<0130:DNF>2.0.CO;2}
}

@article{brunton2016discovering,
  title={Discovering governing equations from data by sparse identification of nonlinear dynamical systems},
  author={Brunton, Steven L. and Proctor, Joshua L. and Kutz, J. Nathan},
  journal={Proceedings of the National Academy of Sciences},
  volume={113},
  number={15},
  pages={3932--3937},
  year={2016},
  publisher={National Academy of Sciences},
  doi={10.1073/pnas.1517384113}
}

@book{sparrow2012lorenz,
  title={The {L}orenz Equations: Bifurcations, Chaos, and Strange Attractors},
  author={Sparrow, Colin},
  series={Applied Mathematical Sciences},
  volume={41},
  year={1982},
  publisher={Springer-Verlag},
  address={New York},
  doi={10.1007/978-1-4612-5767-7}
}

@article{newey2025model,
  title={Model discovery on the fly using continuous data assimilation},
  author={Newey, Joshua and Whitehead, Jared P. and Carlson, Elizabeth},
  journal={Journal of Computational Physics},
  volume={537},
  pages={114121},
  year={2025},
  publisher={Elsevier},
  doi={10.1016/j.jcp.2025.114121}
}

@article{foias2016discrete,
  title={A discrete data assimilation scheme for the solutions of the two-dimensional {N}avier--{S}tokes equations and their statistics},
  author={Foias, Ciprian and Mondaini, Cecilia F. and Titi, Edriss S.},
  journal={SIAM Journal on Applied Dynamical Systems},
  volume={15},
  number={4},
  pages={2109--2142},
  year={2016},
  publisher={SIAM},
  doi={10.1137/16M1076526}
}

@article{pachev2022concurrentmultiparameterlearningdemonstrated,
  title={Concurrent multiparameter learning demonstrated on the {K}uramoto--{S}ivashinsky equation},
  author={Pachev, Benjamin and Whitehead, Jared P. and McQuarrie, Shane A.},
  journal={SIAM Journal on Scientific Computing},
  volume={44},
  number={5},
  pages={A2974--A2990},
  year={2022},
  doi={10.1137/21M1426109}
}

@article{farhat2020data,
  title={Data assimilation in large {P}randtl {R}ayleigh--{B}énard convection from thermal measurements},
  author={Farhat, Aseel and Glatt-Holtz, Nathan E. and Martinez, Vincent R. and McQuarrie, Shane A. and Whitehead, Jared P.},
  journal={SIAM Journal on Applied Dynamical Systems},
  volume={19},
  number={1},
  pages={510--540},
  year={2020},
  publisher={SIAM},
  doi={10.1137/19M1248327}
}

@article{ AOT_2014,
Author = {Azouani, Abderrahim and Olson, Eric and Titi, Edriss S.},
Title = {Continuous Data Assimilation Using General Interpolant Observables},
Journal = {Journal of Nonlinear Science},
Year = {2014},
Volume = {24},
Number = {2},
Pages = {277-304},
month = apr,
DOI = {10.1007/s00332-013-9189-y},
ISSN = {0938-8974},
EISSN = {1432-1467},
ORCID-Numbers = {Titi, Edriss/0000-0002-5004-1746},
Unique-ID = {WOS:000333528200004},
}

@article{jolly2019continuous,
  title={Continuous data assimilation with blurred-in-time measurements of the surface quasi-geostrophic equation},
  author={Jolly, Michael S. and Martinez, Vincent R. and Olson, Eric J. and Titi, Edriss S.},
  journal={Chinese Annals of Mathematics, Series B},
  volume={40},
  number={5},
  pages={721--764},
  year={2019},
  publisher={Springer},
  doi={10.1007/s11401-019-0158-0}
}

@article{yushutin2025continuous,
  title={Continuous data assimilation for problems with limited regularity using non-interpolant observables},
  author={Yushutin, Vladimir},
  journal={Journal of Numerical Mathematics},
  year={2026},
  note={Online first, doi:10.1515/jnma-2025-0046},
  doi={10.1515/jnma-2025-0046}
}

@article{biswas2021mesh,
  title={Mesh-free interpolant observables for continuous data assimilation},
  author={Biswas, Animikh and Brown, Kenneth R. and Martinez, Vincent R.},
  journal={Annals of Applied Mathematics},
  volume={38},
  number={3},
  pages={296--355},
  year={2022},
  doi={10.4208/aam.OA-2022-0006}
}

@article{mangan2017model,
  title={Model selection for dynamical systems via sparse regression and information criteria},
  author={Mangan, Niall M. and Kutz, J. Nathan and Brunton, Steven L. and Proctor, Joshua L.},
  journal={Proceedings of the Royal Society A: Mathematical, Physical and Engineering Sciences},
  volume={473},
  number={2204},
  pages={20170009},
  year={2017},
  publisher={The Royal Society},
  doi={10.1098/rspa.2017.0009}
}

@article{messenger2021weak,
  title={Weak {SINDy}: {G}alerkin-based data-driven model selection},
  author={Messenger, Daniel A. and Bortz, David M.},
  journal={Multiscale Modeling \& Simulation},
  volume={19},
  number={3},
  pages={1474--1497},
  year={2021},
  publisher={SIAM},
  doi={10.1137/20M1343166}
}

@article{viknesh2026adam,
  title={{ADAM-SINDy}: An efficient optimization framework for parameterized nonlinear dynamical system identification},
  author={Viknesh, Siva and Tatari, Younes and Christenson, Chase and Arzani, Amirhossein},
  journal={Physical Review Research},
  volume={8},
  number={1},
  pages={013040},
  year={2026},
  publisher={APS},
  doi={10.1103/dwkk-5g2h}
}

@article{wu2019numerical,
  title={Numerical aspects for approximating governing equations using data},
  author={Wu, Kailiang and Xiu, Dongbin},
  journal={Journal of Computational Physics},
  volume={384},
  pages={200--221},
  year={2019},
  publisher={Elsevier},
  doi={10.1016/j.jcp.2019.01.030}
}

@inproceedings{cranmer2020discovering,
  title={Discovering symbolic models from deep learning with inductive biases},
  author={Cranmer, Miles and Sanchez Gonzalez, Alvaro and Battaglia, Peter and Xu, Rui and Cranmer, Kyle and Spergel, David and Ho, Shirley},
  booktitle={Advances in Neural Information Processing Systems},
  volume={33},
  pages={17429--17442},
  year={2020}
}

@article{farhat2018assimilation,
  title={Assimilation of nearly turbulent {R}ayleigh--{B}{\'e}nard flow through vorticity or local circulation measurements: a computational study},
  author={Farhat, Aseel and Johnston, Hans and Jolly, Michael and Titi, Edriss S.},
  journal={Journal of Scientific Computing},
  volume={77},
  number={3},
  pages={1519--1533},
  year={2018},
  publisher={Springer},
  doi={10.1007/s10915-018-0686-x}
}

@article{biswas2021data,
  title={Data Assimilation for the {N}avier--{S}tokes Equations Using Local Observables},
  author={Biswas, Animikh and Bradshaw, Zachary and Jolly, Michael S.},
  journal={SIAM Journal on Applied Dynamical Systems},
  volume={20},
  number={4},
  pages={2174--2203},
  year={2021},
  publisher={SIAM},
  doi={10.1137/20M136058X}
}

@article{martinez2025reconstruction,
  title={On the reconstruction of unknown driving forces from low-mode observations in the {2D} {N}avier--{S}tokes equations},
  author={Martinez, Vincent R.},
  journal={Proceedings of the Royal Society of Edinburgh Section A: Mathematics},
  volume={155},
  number={6},
  pages={2171--2194},
  year={2025},
  publisher={Royal Society of Edinburgh Scotland Foundation},
  doi={10.1017/prm.2024.31}
}

@mastersthesis{schill2026asymptotic,
  title={Asymptotic Sensitivity and Adjoint Methods for Equation Discovery with Open-Source Implementation},
  author={Schill, Nathan},
  school={Brigham Young University},
  year={2026},
  note={BYU ScholarsArchive Theses and Dissertations 11194},
  url={https://scholarsarchive.byu.edu/etd/11194}
}

@misc{garrigos2023handbook,
  title={Handbook of convergence theorems for (stochastic) gradient methods},
  author={Garrigos, Guillaume and Gower, Robert M.},
  howpublished={arXiv preprint arXiv:2301.11235},
  year={2023},
  eprint={2301.11235},
  archivePrefix={arXiv}
}

@article{farhat2024identifying,
  title={Identifying the body force from partial observations of a two-dimensional incompressible velocity field},
  author={Farhat, Aseel and Larios, Adam and Martinez, Vincent R. and Whitehead, Jared P.},
  journal={Physical Review Fluids},
  volume={9},
  number={5},
  pages={054602},
  year={2024},
  publisher={APS},
  doi={10.1103/PhysRevFluids.9.054602}
}

@article{ CHL_2020,
Author = {Carlson, Elizabeth and Hudson, Joshua and Larios, Adam},
Title = {Parameter Recovery for the 2 Dimensional {N}avier--{S}tokes Equations via Continuous Data Assimilation},
Journal = {SIAM Journal on Scientific Computing},
Year = {2020},
Volume = {42},
Number = {1},
Pages = {A250-A270},
DOI = {10.1137/19M1248583},
ISSN = {1064-8275},
EISSN = {1095-7197},
ORCID-Numbers = {Hudson, Joshua/0000-0003-2764-4538
   Carlson, Elizabeth/0000-0002-5773-9311
   Larios, Adam/0000-0001-8066-1439},
Unique-ID = {WOS:000551241700025},
}

@article{Martinez_2022,
   title={Convergence analysis of a viscosity parameter recovery algorithm for the {2D} {N}avier--{S}tokes equations},
   volume={35},
   ISSN={1361-6544},
   url={http://dx.doi.org/10.1088/1361-6544/ac5362},
   DOI={10.1088/1361-6544/ac5362},
   number={5},
   journal={Nonlinearity},
   publisher={IOP Publishing},
   author={Martinez, Vincent R.},
   year={2022},
   month=apr, pages={2241--2287} }

@misc{alcalde2026data,
  title={Data-driven discovery of polynomial {ODEs} with provably bounded solutions},
  author={Alcalde, Albert and Fantuzzi, Giovanni},
  howpublished={arXiv preprint arXiv:2604.26933},
  year={2026},
  eprint={2604.26933},
  archivePrefix={arXiv}
}

@article{Lorenz_2021,
  title={Dynamically learning the parameters of a chaotic system using partial observations},
  author={Carlson, Elizabeth and Hudson, Joshua and Larios, Adam and Martinez, Vincent R. and Ng, Eunice and Whitehead, Jared P.},
  journal={Discrete and Continuous Dynamical Systems},
  volume={42},
  number={8},
  pages={3809--3839},
  year={2022},
  doi={10.3934/dcds.2022033}
}

@book{asch2016data,
  title={Data Assimilation: Methods, Algorithms, and Applications},
  author={Asch, Mark and Bocquet, Marc and Nodet, Ma{\"e}lle},
  year={2016},
  publisher={SIAM},
  doi={10.1137/1.9781611974546}
}

@article{alizadeh2022advances,
  title={Advances and challenges in climate modeling},
  author={Alizadeh, Omid},
  journal={Climatic Change},
  volume={170},
  number={1--2},
  pages={18},
  year={2022},
  publisher={Springer},
  doi={10.1007/s10584-021-03298-4}
}

@article{beck2019deep,
  title={Deep neural networks for data-driven {LES} closure models},
  author={Beck, Andrea and Flad, David and Munz, Claus-Dieter},
  journal={Journal of Computational Physics},
  volume={398},
  pages={108910},
  year={2019},
  publisher={Elsevier},
  doi={10.1016/j.jcp.2019.108910}
}

@article{grooms2023backscatter,
  title={Backscatter in energetically-constrained {L}eith parameterizations},
  author={Grooms, Ian},
  journal={Ocean Modelling},
  volume={186},
  pages={102265},
  year={2023},
  publisher={Elsevier},
  doi={10.1016/j.ocemod.2023.102265}
}

@article{guan2023learning,
  title={Learning physics-constrained subgrid-scale closures in the small-data regime for stable and accurate {LES}},
  author={Guan, Yifei and Subel, Adam and Chattopadhyay, Ashesh and Hassanzadeh, Pedram},
  journal={Physica D: Nonlinear Phenomena},
  volume={443},
  pages={133568},
  year={2023},
  publisher={Elsevier},
  doi={10.1016/j.physd.2022.133568}
}

@article{carrassi2018data,
  title={Data assimilation in the geosciences: An overview of methods, issues, and perspectives},
  author={Carrassi, Alberto and Bocquet, Marc and Bertino, Laurent and Evensen, Geir},
  journal={Wiley Interdisciplinary Reviews: Climate Change},
  volume={9},
  number={5},
  pages={e535},
  year={2018},
  publisher={Wiley Online Library},
  doi={10.1002/wcc.535}
}

@book{law2015data,
  title={Data Assimilation: A Mathematical Introduction},
  author={Law, Kody and Stuart, Andrew and Zygalakis, Konstantinos},
  series={Texts in Applied Mathematics},
  volume={62},
  year={2015},
  publisher={Springer},
  address={Cham},
  doi={10.1007/978-3-319-20325-6}
}

\end{document}